\theoremstyle{definition}
\newtheorem{remark}{Remark}
\theoremstyle{plain}
\newtheorem{theorem}{Theorem}
\newtheorem{lemma}{Lemma}
\theoremstyle{definition}
\newtheorem{definition}{Definition}
\newtheorem{condition}{Condition}
\newcommand{\Fr}{\mathrm{F}}
\newcommand{\Rel}{\mathrm{Rel}}
\newcommand{\Mon}{\mathrm{Mon}}
\newcommand{\Ideal}{\mathcal{I}}
\newcommand{\M}{\Lambda}
\newcommand{\Mp}{\Lambda^{\prime}}
\newcommand{\Sp}{\mathcal{S}}
\newcommand{\Add}{\mathrm{Add}}
\newcommand{\Dp}{\mathrm{Dp}}
\newcommand{\Low}{\mathrm{L}}
\newcommand{\MUp}{\text{$\mathrm{Equal}$-$\mathrm{f}$}}
\newcommand{\mincov}{\mathsf{MinCov}}
\newcommand{\nvirt}{\mathsf{NVirt}}
\newcommand{\llangle}{\langle\langle }
\newcommand{\rrangle}{\rangle\rangle }
\newcommand{\DMUp}[1]{\MUp(#1)_d}
\newcommand{\deglex}{\mathrm{deglex}}
\newcommand{\abs}[1]{\vert #1 \vert}
\title{Small Cancellation Rings are non-amenable}
\author{A.\,Atkarskaya}
\address{Einstein Institute of Mathematics, The Hebrew University of Jerusalem, Givat Ram, 9190401 Jerusalem, Israel}
\email{atkarskaya.agatha@gmail.com}
\begin{document}
\begin{abstract}
We show that small cancellation rings defined in~\cite{AKPR2} under some natural extra restrictions are non-amenable and contain free associative algebra with two free generators.
\end{abstract}

\maketitle

\section{Introduction}

The small cancellation theory for groups is well known (see~\cite{LS}). In particular, it is known that finitely presented small cancellation groups turned out to be word hyperbolic (when every relation needs at least 7 pieces). In papers~\cite{AKPR1} and~\cite{AKPR2} we develop group-like small cancellation theory for associative rings. We introduce these rings by generators and defining relations, define small pieces and develop ring analogues of small cancellation conditions. The main result of~\cite{AKPR2} is that such rings are non-trivial. Moreover there exists Dehn's-type algorithm aimed to check whether a given element of a small cancellation ring is trivial or non-trivial (see~\cite{Ly}).

Now it seems quite important to study further properties of small cancellation rings. M.\,Gromov conjectured that small cancellation algebras are non-amenable (M.\,Gromov, private correspondence), see~\cite{B},~\cite{El},~\cite{Gr}, for the definition of amenable associative algebras. In this paper we prove this conjecture. The main result is stated in Theorem~\ref{sc_rings_non_amenable} and claims that under some natural conditions small cancellation rings are non-amenable.

One can easily check that the free algebra with at least two free generators is non-amenable using the Folner criterion as stated by Bartholdi,~\cite{B}. For groups it is known that if a given group contains the free group $\Fr_2$, then this group is non-amenable. It is known that small cancellation groups satisfying condition $C(4)$ together with $T(4)$, as well as satisfying condition $C(7)$ (classic or graphical) contain $\Fr_2$ (see~\cite{El-M},~\cite{Col},~\cite{Gru}). So these groups are non-amenable. For associative algebras a similar property does not hold. Indeed, it is well known that solvable groups are amenable. There exist solvable groups that contain the free semigroup with two free generators $\langle x, y\rangle$ (see~\cite{Bel}). Let $G$ be such group. By the result of L.\,Bartholdi,~\cite{B}, the group algebra $kG$ is amenable and by construction it contains free associative algebra $k\langle x, y\rangle$. Along with non-amenability, we prove in the paper that under some additional conditions small cancellation rings contain free associative algebra with two free generators, see Theorem~\ref{free_subelgebra_exists}.

Let us notice that amenability of monomial algebras was studied in~\cite{BellBeeri}, in particular it is proved there that non-amenable monomial algebras must contain noncommutative free subalgebras.

It is well known that infinite not virtually cyclic hyperbolic groups contain $\Fr_2$, and hence are non-amenable. We consider small cancellation rings as a step towards (not yet defined) hyperbolic rings, so their non-amenability confirms this direction.

We want to notice that the construction of small cancellation rings is quite technical, so we do not present it in all the details in this paper. Here we state only main facts about small cancellation rings and for the details we constantly refer to paper~\cite{AKPR2}.

\textbf{Acknowledgements.} The author thanks M.\,Gromov whose question about non-amenability of small cancellation rings gave raise to this research. The author is grateful to E.\,Plotkin and E.\,Rips for reading the manuscript.

This research is supported by the ISF fellowship.

\section{Introduction to group-like small cancellation conditions for rings}
Let $\Fr$ be a free group, $k$ be a field, and $k\Fr$ be the corresponding group algebra. The reduced representatives of the elements of $\Fr$ are called words or monomials. Their linear combinations with coefficients from $k$ are called polynomials. We denote coefficients from~$k$ by small Greek letters (unless otherwise is stated). The free generators of $\Fr$ and their inverses are called letters. Let $A, B$ be monomials in $\Fr$. We write the product of $A$ and $B$ as $A\cdot B$. However, if there is no cancellation in this product, we omit $\cdot$ and write it as $AB$. 

Given a word $A$, we denote by $\abs{A}$ the length of $A$ in letters.

Given a set $H \subseteq k\Fr$, we denote its linear span over the field $k$ by $\langle H \rangle$. We denote the ideal of $k\Fr$ generated by~$H$ by~$ \llangle H\rrangle$.

Let $\Rel \subseteq k\Fr$ be a set of polynomials and $\Ideal = \llangle\Rel\rrangle$ be an ideal of $k\Fr$ generated by $\Rel$ as an ideal. Let $\Mon$ be all monomials that participate in the polynomials from $\Rel$. Group-like small cancellation theory for rings states a list of conditions that guarantee non-triviality of the quotient ring $k\Fr / \Ideal$. In this section we give a short introduction to these conditions. For all the details see~\cite{AKPR2}, Section~$2$, and~\cite{AKPR3}.

\begin{condition}[Compatibility Axiom]
We require that $\Rel$ is closed under multiplication by elements from $k$ and under multiplication by letters that cancel with some monomial. For instance, if $a + xb \in \Rel$, where $a, b$ are monomials and $x$ is a single letter, then $x^{-1}\cdot a + b\in \Rel$ as well. This condition is called Compatibility Axiom. For the further axioms we assume that $\Rel$ satisfies this condition. In particular, this condition implies that $\Mon$ is closed under taking subwords.
\end{condition}

The main notion is a small piece.
\begin{definition}[small pieces]
\label{sp}
We assume that $\Rel$ satisfies Compatibility Axiom. Let $c \in \Mon$. Let $p, q \in \Rel$ be polynomials such that $c$ is a subword in a monomial in $p$ and in $q$. Namely, $p$ contains a monomial $\widehat{a}_1c\widehat{a}_2$ and $q$ contains $\widehat{b}_1c\widehat{b}_2$ ($\widehat{a}_1$, $\widehat{a}_2$, $\widehat{b}_1$, $\widehat{b}_2$ are allowed to be empty). If at least one of $\widehat{b}_1\cdot \widehat{a}_1^{-1}\cdot p$ and $p\cdot \widehat{a}_2^{-1}\cdot \widehat{b}_2$ is not contained in $\Rel$, then $c$ is called \emph{a small piece with respect to $\Rel$}. By definition, we require that the empty word $1$ is always a small piece. The set of all small pieces with respect to $\Rel$ is denoted by~$\Sp$.
\end{definition}

We fix a constant $\tau \geqslant 10$. The central axiom is the following
\begin{condition}[Small Cancellation Axiom]
\label{sc_ax}
Assume $q_1, \ldots, q_n \in \Rel$ and a linear combination $\sum_{l = 1}^{n} \gamma_l q_l$ is non-zero after additive cancellations. Then there exists a monomial $a$ in $\sum_{l= 1}^{n} \gamma_l q_l$ with a non-zero coefficient after additive cancellations such that either $a$ can not be represented as a product of small pieces or every such representation of $a$ contains at least $\tau + 1$ small pieces.
\end{condition}

There is also the third condition called Isolation Axiom. This condition is very technical, so we do not give it here. Roughly speaking, it guarantees that monomials that differ only by one small piece in the beginning or at the end can not participate in one polynomial in some closure of $\Rel$ called $\Add(\Rel)$ (see \cite{AKPR2}, Section~$2$, Conditions~3a and~3b, and Section~$10$, for the definition of $\Add(\Rel)$).

The main result of Group-like Small cancellation theory is as follows
\begin{theorem}
Let $\Rel \subseteq k\Fr$ satisfy group-like small cancellation conditions. Then the quotient ring $k\Fr/ \llangle \Rel\rrangle$ is non-trivial.
\end{theorem}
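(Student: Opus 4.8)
The plan is to prove the contrapositive-flavoured statement $1 \notin \Ideal$, which is equivalent to $k\Fr/\Ideal \neq 0$: the quotient is trivial precisely when the identity lies in the ideal. I would establish this through a ring-theoretic Greendlinger-type lemma, obtained from a Dehn-style rewriting procedure together with the Small Cancellation Axiom. The first step is to set up a rewriting system on $k\Fr$. Given $p \in \Rel$ with a monomial $m$ occurring with non-zero coefficient $\gamma$, write $p = \gamma m + p'$; then modulo $\Ideal$ one may replace any reduced occurrence $u\,m\,v$ by $-\gamma^{-1} u\, p'\, v$. Call such a move an elementary turn. Using the Compatibility Axiom (which keeps $\Rel$ and $\Mon$ closed under the relevant operations) one localizes the effect of a turn and equips polynomials with a well-founded complexity measure — roughly, the multiset of lengths of the participating monomials ordered $\deglex$-wise — so that "good" turns strictly decrease complexity, and every polynomial is congruent modulo $\Ideal$ to a reduced one admitting no good turn.

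The heart of the argument is the second step: showing that a non-zero element of $\Ideal$ is never reduced, and in fact always exhibits "most of a relation" on its boundary. For $f \in \Ideal$ one records a derivation $f = \sum_i \alpha_i q_i \beta_i$ with $q_i \in \Rel$ (two-sided ideal membership) inside a bookkeeping object — a ring analogue of a van Kampen diagram — and chooses one minimal for an appropriate ordering (number of cells, then total length). Minimality forces strong constraints: two adjacent cells can share only a small piece, for otherwise the Small Cancellation Axiom would permit merging or shortening them; and the additive cancellations occurring when the translated polynomials $\alpha_i q_i \beta_i$ are summed are likewise constrained, since a surviving monomial of the sum must, by the Small Cancellation Axiom, fail to be a product of at most $\tau$ small pieces. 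From the structure of a minimal diagram one then extracts the Greendlinger analogue: every non-zero $f \in \Ideal$ has, after reduction, a monomial with non-zero coefficient that either admits no factorization into small pieces at all, or every such factorization uses more than $\tau$ of them. The Isolation Axiom and the passage to the closure $\Add(\Rel)$ are exactly what guarantees that monomials differing by a single boundary small piece cannot coexist in one polynomial, which is the input that makes this extraction rigorous.

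The conclusion is then immediate. The element $1 \in k\Fr$ is the empty word, which by the convention in Definition~\ref{sp} is itself a small piece; so the monomial $1$ is a product of a single small piece, far fewer than $\tau + 1$. Hence $1$ cannot satisfy the conclusion of the Greendlinger analogue, so $1 \notin \Ideal$, and therefore $k\Fr/\Ideal \neq 0$.

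I expect the main obstacle to be the second step, specifically the need to tame multiplicative cancellation in $\Fr$ and additive cancellation in $k\Fr$ simultaneously. Unlike the group case, where each cell of a diagram carries a single reduced relator, here each cell carries an entire polynomial with many monomials, so the diagram must also track which monomials survive the global additive cancellation; ensuring that a minimal derivation still leaves an uncancelled large chunk of some relation visible on the boundary is the delicate combinatorial core of the proof, and this is precisely where the hypothesis $\tau \geqslant 10$, the Isolation Axiom, and the $\Add(\Rel)$ machinery referenced in the excerpt are needed.
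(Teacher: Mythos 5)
Your overall strategy is the right one, and it is essentially the route the paper itself relies on: this theorem is quoted from \cite{AKPR2} without proof, and the only place the present paper touches its substance is the proof of Lemma~\ref{empty_chart_words}, which invokes exactly the Greendlinger-type statement you are aiming for --- every non-zero element of $\llangle\Rel\rrangle$ contains a monomial with non-empty virtual chart, that is, a monomial with an occurrence of $\M$-measure at least $\tau-2$ (Proposition~8.5 and Theorem~5 of \cite{AKPR2}). Your concluding step is also correct and matches: the empty word is a single small piece by Definition~\ref{sp}, so it cannot carry such an occurrence, hence $1\notin\llangle\Rel\rrangle$ and the quotient is non-trivial.

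The gap is that your second step \emph{is} the theorem, and you state it as a programme rather than prove it. Choosing a minimal derivation $f=\sum_i\alpha_i q_i\beta_i$ and asserting that ``minimality forces strong constraints'' is precisely where the group-theoretic intuition stops carrying you and where \cite{AKPR2} spends most of its length: one must control simultaneously the multiplicative cancellation between the conjugating words and the relators and the additive cancellation among the many monomials of each $q_i$; moreover the ``merging or shortening'' move you invoke has no obvious well-founded complexity that it decreases (an elementary turn replacing $\gamma m + p'$ can introduce monomials longer than $m$, so the multiset of lengths may grow). You correctly identify this as the delicate core and correctly name the ingredients ($\tau\geqslant 10$, the Isolation Axiom, the closure $\Add(\Rel)$), but as written the argument is a plan for a proof, not a proof; to be complete it would have to reproduce the derived-monomial and virtual-chart machinery of \cite{AKPR2} or an equivalent substitute for it.
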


\section{Preliminaries}

\label{prel}
In this section we state notions needed to prove the main results, that is, Theorems~\ref{free_subelgebra_exists} and~\ref{sc_rings_non_amenable}.

In~\cite{AKPR2} we introduce a measure on $\Mon$ and denote it by $\M$ (aka $\M$-measure). By definition $\M(u) = t$ for $u \in \Mon$ if $u$ can be represented as a concatenation of $t$ small pieces and can not be represented as a concatenation of less number of small pieces. If $u$ can not be represented as a concatenation of small pieces at all, then we put $\M(u) = \infty$.

The triple that consists of a word $A$, its subword $u$ and a position of $u$ in $A$ is called an occurrence of $u$ in $A$. Further we are interested in occurrences of $u \in \Mon$.

If a given occurrence of an element of $\Mon$ in a word $A$ is not contained in any other occurrence of elements of $\Mon$, then it is called \emph{maximal}. In what follows we call them shortly \emph{maximal occurrences}, without mentioning $\Mon$. See~\cite{AKPR2}, Section~$3$, Definitions~$3.1$ and~$3.2$ for the detailed definition.

The set of all maximal occurrences in a word $A$ such that they have $\M$-measure $\geqslant \tau$ is called \emph{the chart of $A$}. If there are no such occurrences, we say that the chart of $A$ is empty. In the next statement instead of a chart we use a term \emph{virtual chart}. This notion is a special extension of the notion of the chart of $A$ and its every element has $\M$-measure $\geqslant \tau - 2$ (see~\cite{AKPR2}, Section~$6$, Definition~$6.5$).

\begin{lemma}
\label{empty_chart_words}
Let $\Rel \subseteq k\Fr$ satisfy group-like small cancellation conditions. Let $w_1, \ldots, w_k, \ldots$ be finite or infinite set of pairwise different words. Assume that $w_i$ does not contain occurrences of elements from $\Mon$ of $\M$-measure $\geqslant \tau - 2$ for all $i$. Then $w_1, \ldots, w_k, \ldots$ are linearly independent over $k$ $\mod \llangle \Rel\rrangle$ (i.e., their cosets in $k\Fr / \llangle \Rel\rrangle$ are linearly independent).
\end{lemma}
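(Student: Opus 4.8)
The plan is to argue by contradiction: suppose some nontrivial linear combination $\sum_{i=1}^{m}\gamma_i w_i$ with $\gamma_i\in k$, not all zero, lies in $\Ideal=\llangle\Rel\rrangle$. After discarding the $w_i$ with zero coefficient we may assume all $\gamma_i\neq 0$ and the $w_i$ are pairwise distinct. Being in the ideal, $\sum_i\gamma_i w_i$ can be written as a finite sum $\sum_{l}\alpha_l\cdot r_l\cdot\beta_l$ with $r_l\in\Rel$ and $\alpha_l,\beta_l$ monomials; collecting terms, this equals a linear combination $\sum_{l}\delta_l q_l$ of polynomials $q_l\in\Rel$ (the $\delta_l\in k$), which is nonzero after additive cancellations since it equals $\sum_i\gamma_i w_i\neq 0$. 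First I would invoke the Small Cancellation Axiom (Condition~\ref{sc_ax}) on this combination: it produces a monomial $a$ occurring with nonzero coefficient in $\sum_l\delta_l q_l$ such that either $a$ is not a product of small pieces, or every representation of $a$ as a product of small pieces uses at least $\tau+1$ of them — in either case $\M(a)\geqslant\tau+1$.

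The key step is then to locate this monomial $a$ inside the right-hand side's visible form, i.e. inside $\sum_i\gamma_i w_i$. Since $\sum_l\delta_l q_l=\sum_i\gamma_i w_i$ as polynomials, $a$ appears with nonzero coefficient on the right, so $a=w_{i_0}$ for some $i_0$. But a single word $w_{i_0}$ from our list, having $\M(w_{i_0})=\M(a)\geqslant\tau+1\geqslant\tau-2$, would itself be an occurrence of an element of $\Mon$ of $\M$-measure $\geqslant\tau-2$ in $w_{i_0}$ — once we know $a\in\Mon$. This is exactly what the hypothesis forbids. The point requiring care is that the conclusion of the Small Cancellation Axiom only asserts $a$ has large $\M$-measure; to contradict the hypothesis we must know $a$ is a subword of some $w_i$ that is itself in $\Mon$ (recall $\M$ is defined on $\Mon$). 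Since $a$ is a monomial participating in some $q_l\in\Rel$, by definition $a\in\Mon$; and since $a=w_{i_0}$, the whole word $w_{i_0}$ is an occurrence of the element $a\in\Mon$ of $\M$-measure $\geqslant\tau-2$ inside $w_{i_0}$, contradicting the assumption on the $w_i$.

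I expect the main obstacle to be the bookkeeping needed to pass rigorously from "$\sum_i\gamma_i w_i\in\Ideal$" to a linear combination $\sum_l\delta_l q_l$ of elements of $\Rel$ (not merely of two-sided multiples $\alpha r\beta$) to which the Small Cancellation Axiom as stated applies. Here the Compatibility Axiom does the work: it ensures $\Rel$ is closed under multiplication by field elements and by cancelling letters, so each $\alpha_l\cdot r_l\cdot\beta_l$ — after performing whatever cancellation occurs — can be absorbed into $\Rel$ (one peels off the letters of $\alpha_l$ and $\beta_l$ one at a time, each step landing back in $\Rel$ by the axiom), reducing the general ideal element to a $k$-linear combination of members of $\Rel$. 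A secondary point is the trivial but necessary remark that additive cancellation among the $q_l$'s cannot kill everything, precisely because the result is the nonzero polynomial $\sum_i\gamma_i w_i$, which is what licenses the application of Condition~\ref{sc_ax}. The virtual-chart strengthening (threshold $\tau-2$ rather than $\tau$) is not needed for this direction of the argument and only makes the hypothesis on the $w_i$ weaker, hence the statement stronger; I would simply note that $\M(a)\geqslant\tau+1>\tau-2$ suffices.
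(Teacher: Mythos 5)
There is a genuine gap, and it sits exactly at the point you flagged as the ``main obstacle'': the passage from $\sum_i\gamma_i w_i\in\llangle\Rel\rrangle$ to a $k$-linear combination $\sum_l\delta_l q_l$ with $q_l\in\Rel$. The Compatibility Axiom does \emph{not} do this work. It only closes $\Rel$ under multiplication by scalars and by letters that \emph{cancel with some monomial} of the given relator (as in the example $a+xb\in\Rel\Rightarrow x^{-1}\cdot a+b\in\Rel$). If you try to peel the letters of $\alpha_l$ and $\beta_l$ off a term $\alpha_l\cdot r_l\cdot\beta_l$ one at a time, the process stops as soon as you reach a letter that cancels with no monomial of the current relator; beyond that point there is no axiom putting the product back into $\Rel$. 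So a general element of the ideal is a sum of two-sided multiples $\alpha r\beta$ and is emphatically not reducible to an element of the linear span of $\Rel$. Consequently the Small Cancellation Axiom, which by its statement applies only to linear combinations of elements of $\Rel$, cannot be invoked directly on an arbitrary ideal element. (Your endgame --- identifying the large-measure monomial $a$ with some $w_{i_0}$ and noting $a\in\Mon$ --- would be fine if the reduction were available, but it is not.)

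Bridging this gap is precisely the content of the hard structural result the paper cites: Proposition~8.5 and Theorem~5 of~\cite{AKPR2}, which assert that every non-zero element of $\llangle\Rel\rrangle$ contains a monomial with non-empty \emph{virtual} chart. This is the ring analogue of Greendlinger's lemma for the normal closure in group small cancellation theory, and it requires the whole apparatus of derived monomials and charts rather than a one-line application of the axioms to $\Rel$ itself. The paper's proof simply combines that theorem with the observation that the hypothesis on the $w_i$ forces their virtual charts to be empty. This also explains why the threshold in the lemma is $\tau-2$ rather than $\tau$ or $\tau+1$: members of the virtual chart are only guaranteed to have $\M$-measure $\geqslant\tau-2$, so the hypothesis is calibrated to the cited theorem, not (as you suggest) a gratuitous weakening. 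To repair your write-up, replace the reduction-to-$\Rel$ step with an appeal to that structural theorem.
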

\begin{proof}
In~\cite{AKPR2} we proved that every non-zero element of $\Ideal = \llangle \Rel\rrangle$ contains a monomial with non-empty virtual chart (see Proposition~8.5 and Theorem~5 in~\cite{AKPR2}). This is in fact one of the basic and crucial consequences of group-like small cancellation conditions. Hence, there are no polynomials in $\llangle \Rel\rrangle$ that are linear combinations of monomials with empty virtual chart only. Since $w_i$ does not contain occurrences of elements from $\Mon$ of $\M$-measure $\geqslant \tau - 2$, we obtain that the virtual chart of $w_i$ is empty. Therefore, any linear combination of $w_1, \ldots, w_k, \ldots$ does not belong to $\llangle \Rel\rrangle$. Thus, their cosets in $k\Fr / \llangle \Rel\rrangle$ are linearly independent.
\end{proof}

\medskip
Now we introduce preliminary statements needed for Section~\ref{non_amenability_sec}.

In~\cite{AKPR2} (Section~6 and Section~7.1) we introduce $f$-characteristic of a monomial: $f(A) = (\mincov(A), \nvirt(A))$, where $\nvirt(A)$ is the number of virtual members of the chart of~$A$ and $\mincov(A)$ is the number of elements in a minimal covering of~$A$. To calculate $\mincov(A)$ we consider all maximal occurrences of $\Mon$ in $A$ and count the minimal number of such occurrences that is required in order to cover all letters of $A$ that belong to $\Mon$ (see the examples below, $\lbrace u_i\rbrace_{i = 1}^{k}$ is the set of maximal occurrences in $A$ enumerated from left to right).
\begin{center}
\begin{tikzpicture}
\draw[|-|] (0, 0) to node[below, at start] {$A$} (7, 0);
\draw[|-|, thick] (2, 0) to node[below, midway] {$u_1$} (5, 0);
\node at (1, 0.5) {\small $\mincov(A) = 1$};
\end{tikzpicture}

\begin{tikzpicture}
\draw[|-|] (0, 0) to node[below, at start] {$A$} (7, 0);
\draw[|-|, thick] (1, 0) to node[below, midway] {$u_1$} (3, 0);
\draw[|-|, thick] (2.5, -0.15) to node[below, midway] {$u_2$} (5, -0.15);
\draw[|-|, thick] (4.5, 0) to node[below, midway] {$u_3$} (6.5, 0);
\node at (1, 0.5) {\small $\mincov(A) = 3$};
\end{tikzpicture}

\begin{tikzpicture}
\draw[|-|] (0, 0) to node[below, at start] {$A$} (7, 0);
\draw[|-|, thick] (1, 0) to node[below, midway] {$u_1$} (3, 0);
\draw[|-|, thick] (2.5, -0.15) to node[below, midway] {$u_2$} (3.5, -0.15);
\draw[|-|, thick] (3, 0) to node[below, midway] {$u_3$} (5.5, 0);
\node at (1, 0.5) {\small $\mincov(A) = 2$};
\end{tikzpicture}
\end{center}

If $p = \sum_{i = 1}^n\alpha_ia_i \in \Rel$, then $a_{i_1}, a_{i_2}$ are called~\emph{incident monomials}. Given a monomial $Z$, we can consecutively replace its virtual members of the chart by incident monomials, then do the same operation with the resulting monomials, etc.. All monomials that are obtained by this process are called~\emph{derived monomials of~$Z$}. Their linear span over $k$ is denoted by $\langle Z\rangle_d$. The subspace of $\langle Z\rangle_d$ generated by all monomials with $f$-characteristics $< f(Z)$ is denoted by $\Low\langle Z\rangle_d$. The set of derived monomials of $Z$ with the same $f$-characteristics as $f(Z)$ is denoted by $\DMUp{Z}$. If $\langle Z_i\rangle_d \neq \langle Z_j\rangle_d$, then $\DMUp{Z_i} \cap \DMUp{Z_j} = \varnothing$ by construction of derived monomials. Let~$\Dp\langle Z\rangle_d \subseteq \langle Z\rangle_d$ be linearly generated by all elements of the form $\sum_{i = 1}^n\alpha_i La_iR$, where $\sum_{i = 1}^n\alpha_ia_i \in \Rel$ and all $La_iR$ are derived monomials of~$Z$.

We also need the following extension of incidence of monomials. Let $Lu_iR \in \Fr$, $u_i \in \Mon$ for $1 \leqslant i \leqslant t$, and all pairs $u_i, u_{i + 1}$ be incident monomials. Assume that $u_i$ is a virtual member of the chart of $Lu_iR$ for $1 < i < t$. Then $u_1, u_t$ are called $Lu_1R$-incident monomials.

The following statement describes the structure of~$k\Fr / \Ideal$ (see~\cite{AKPR2}, Section~9.2, Theorem~3).
\begin{theorem}
\label{sc_ring_basis}
As a linear space
\begin{equation*}
k\Fr / \Ideal  \cong \bigoplus\limits_{j \in J} \langle Z_j\rangle_d / (\Low\langle Z_j\rangle_d + \Dp\langle Z_j\rangle_d),
\end{equation*}
where $\lbrace \langle Z_j\rangle_d\rbrace_{j \in J}$ are all different spaces of such form such that the corresponding quotient spaces in the above sum are non-trivial. Furthermore let $\mathcal{B}_j \subseteq \langle Z_j\rangle_d$ be a set of monomials such that $\mathcal{B}_j + \Low\langle Z_j\rangle_d + \Dp\langle Z_j\rangle_d$ is a basis of the corresponding quotient space. Then $\mathcal{B} + \Ideal$, where $\mathcal{B} = \bigsqcup\limits_{j \in J}\mathcal{B}_j$, is a basis of $k\Fr / \Ideal$.
\end{theorem}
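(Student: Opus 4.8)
The statement to prove is Theorem~\ref{sc_ring_basis}, the structural decomposition of $k\Fr/\Ideal$ as a direct sum over the derived-monomial spaces $\langle Z_j\rangle_d$. Let me sketch how I would attack this.

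The plan is to prove the basis assertion; the linear isomorphism then follows by matching each $\mathcal{B}_j$ with the corresponding block of $\mathcal{B}$. Everything is organized by the $f$-characteristic: order the pairs $(\mincov,\nvirt)$ lexicographically, which is a well-order, and for a value $f_0$ let $W_{f_0}$, $W_{<f_0}$ and $W_{=f_0}$ denote the $k$-spans in $k\Fr$ of the monomials $A$ with, respectively, $f(A)\leqslant f_0$, $f(A)<f_0$ and $f(A)=f_0$.

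First I would settle the combinatorics of derivation. Using the rigidity of maximal occurrences together with the Isolation Axiom, one checks that replacing a virtual member of the chart by an incident (or $A$-incident) monomial never raises the $f$-characteristic, and that a replacement leaving it unchanged can be reversed by another such replacement. Consequently the relation of being obtainable from one another by a finite chain of $f$-characteristic-preserving replacements is an equivalence relation on each level set $\{A:f(A)=f_0\}$, whose classes are precisely the sets $\DMUp{Z}$; in particular these classes partition $\{A:f(A)=f_0\}$, giving $W_{=f_0}=\bigoplus_{j\,:\,f(Z_j)=f_0}\mathrm{span}\bigl(\DMUp{Z_j}\bigr)$, and the family $\{\langle Z_j\rangle_d\}_{j\in J}$ is well defined since two representatives of a class span the same $\langle\,\cdot\,\rangle_d$. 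For $f(Z_j)=f_0$ one obtains the vector-space splitting $\langle Z_j\rangle_d=\mathrm{span}\bigl(\DMUp{Z_j}\bigr)\oplus\Low\langle Z_j\rangle_d$, because a derived monomial of $Z_j$ either has $f$-characteristic $f_0$, and then lies in $\DMUp{Z_j}$, or has smaller $f$-characteristic, and then is one of the monomials spanning $\Low\langle Z_j\rangle_d$. Since $\Low\langle Z_j\rangle_d$ absorbs every monomial of characteristic $<f_0$, we may and do take each $\mathcal{B}_j$ inside $\DMUp{Z_j}$.

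The analytic core is the assertion that, modulo $W_{<f_0}$, the only $\Ideal$-relations among level-$f_0$ monomials are the defining ones, i.e.\ $\Ideal\cap W_{f_0}\subseteq W_{<f_0}+\sum_{j\,:\,f(Z_j)=f_0}\Dp\langle Z_j\rangle_d$. Writing an element of $\Ideal\cap W_{f_0}$ as $\sum_k L_k p_k R_k$ with $p_k\in\Rel$ (after passing, if necessary, to $\Add(\Rel)$), one proves by induction on $f_0$ over the well-order that this representation can be normalized so that each summand $L_k p_k R_k$ is either supported on monomials of characteristic $<f_0$, and hence absorbed into $W_{<f_0}$ by the inductive hypothesis, or equals a generator $\sum_i\alpha_i L a_i R$ of some $\Dp\langle Z_j\rangle_d$ with $f(Z_j)=f_0$. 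The driving mechanism is the familiar one: if in the normalized expression a monomial of the top characteristic $f_0$ that is not a derived monomial of any such $Z_j$ survived, then pushing it through the additive cancellations --- with the Small Cancellation Axiom (Condition~\ref{sc_ax}) controlling how the $L_k p_k R_k$ overlap and the Isolation Axiom preventing their charts from merging --- would yield a monomial with non-empty virtual chart that cannot cancel, contradicting membership in $\Ideal$, exactly as in the proof of Lemma~\ref{empty_chart_words}. Carrying out this normalization is the main obstacle, and it is where essentially all of the small cancellation content of~\cite{AKPR2} is expended.

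Granting the last two paragraphs, both assertions follow. For spanning, argue by transfinite induction on $f(A)$: let $Z$ represent the level-$f(A)$ class of $A$; in $\langle Z\rangle_d/(\Low\langle Z\rangle_d+\Dp\langle Z\rangle_d)$ the image of $A$ is a (possibly empty) $k$-combination of the relevant $\mathcal{B}_j$, so $A$ equals that combination plus an element of $\Low\langle Z\rangle_d$ plus an element of $\Dp\langle Z\rangle_d\subseteq\Ideal$; as $\Low\langle Z\rangle_d$ is spanned by monomials of characteristic $<f(A)$, the inductive hypothesis puts the coset of $A$ in $k\Fr/\Ideal$ inside the $k$-span of $\mathcal{B}+\Ideal$. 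For independence, suppose $0\neq x=\sum_{b\in\mathcal{B}}\gamma_b b\in\Ideal$ and let $f_0$ be the largest characteristic occurring among the $b$ with $\gamma_b\neq0$. The level-$f_0$ part $x_{=f_0}$ lies in $W_{=f_0}=\bigoplus_{j\,:\,f(Z_j)=f_0}\mathrm{span}\bigl(\DMUp{Z_j}\bigr)$, and by the core step and the normalization its component in each $\mathrm{span}\bigl(\DMUp{Z_j}\bigr)$ lies in the image there of $\Dp\langle Z_j\rangle_d$; but that component equals $\sum_{b\in\mathcal{B}_j}\gamma_b b$, so its class in $\langle Z_j\rangle_d/(\Low\langle Z_j\rangle_d+\Dp\langle Z_j\rangle_d)$ vanishes, forcing $\gamma_b=0$ for every $b\in\mathcal{B}_j$ and thus for every $b$ with $f(b)=f_0$ --- contradicting the choice of $f_0$. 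Hence $\mathcal{B}+\Ideal$ is a basis of $k\Fr/\Ideal$, and matching it block-by-block with the bases $\mathcal{B}_j$ of the summands gives the displayed linear isomorphism, with $J$ indexing exactly the classes whose quotient is non-trivial.
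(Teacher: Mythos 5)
The paper does not prove this statement at all: it is imported verbatim from \cite{AKPR2} (Section~9.2, Theorem~3), so there is no internal proof to compare against. Judged on its own terms, your outline reproduces the right global strategy of \cite{AKPR2} --- filter $k\Fr$ by the lexicographic order on $f$-characteristics, identify the level sets with the classes $\DMUp{Z_j}$, and reduce everything to the claim that, modulo lower levels, the only relations among top-level monomials are those coming from $\Dp\langle Z_j\rangle_d$. The spanning and independence arguments you give at the end are routine once that claim is granted.

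The genuine gap is that the claim itself --- your ``analytic core,'' $\Ideal\cap W_{f_0}\subseteq W_{<f_0}+\sum_{j}\Dp\langle Z_j\rangle_d$ --- is asserted rather than proved, and the one-sentence justification you offer for it does not work. You argue that a surviving monomial of top characteristic not accounted for by the $\Dp\langle Z_j\rangle_d$ ``would yield a monomial with non-empty virtual chart that cannot cancel, contradicting membership in $\Ideal$.'' But membership in $\Ideal$ is perfectly compatible with having monomials with non-empty virtual chart: every generator $LpR$, $p\in\Rel$, contains such monomials, and Proposition~8.5/Theorem~5 of \cite{AKPR2} assert the converse direction (every nonzero element of $\Ideal$ \emph{must} contain one), which gives no contradiction here. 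The actual normalization of an arbitrary expression $\sum_k L_kp_kR_k$ into layers compatible with the filtration is the entire content of \cite{AKPR2}: it requires the analysis of multi-turns, the monotonicity and reversibility lemmas for replacements that you also only assert, the Isolation Axiom to keep distinct virtual members from interfering, and the tensor-product decomposition~\eqref{tensor_prod} to separate the contributions of different virtual members. Without that machinery the proposal is an accurate table of contents for the proof, not a proof.
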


\begin{remark}
\label{f_char_decomp}
In~\cite{AKPR2} we show that $\Fr = \bigsqcup_{i} \DMUp{Z_i}$ for some set of monomials $\lbrace Z_i\rbrace_i$. We can choose a set $\lbrace Z_j \rbrace_{j \in J}$ in Theorem~\ref{sc_ring_basis} as a subset of $\lbrace Z_i\rbrace_i$.
\end{remark}

Based on $f$-characteristics and the partition of~$\Fr$ given in Remark~\ref{f_char_decomp}, one can define a linear order $<_f$ on monomials such that if $f(U_1) < f(U_2)$, then $U_1 <_f U_2$ (see~\cite{AKPR2}, Section~10, Definition~10.1, we compare $f$-characteristics lexicographically).

Let $A$ be a monomial. In~\cite{AKPR2} (Section~$8.2$, Proposition~$8.10$) we show that as a linear space
\begin{equation}
\label{tensor_prod}
\langle A\rangle_d / (\Low\langle A\rangle_d + \Dp\langle A\rangle_d) \cong \bigotimes_{s = 1}^n A_s[A] / (\Low_s[A] + \Dp_s[A]),
\end{equation}
where $n = \nvirt(A)$, $A_s[A]$ is linearly generated by monomials $A$-incident to $s$-th virtual member of the chart of $A$, and $\Dp_s[A]$ is linearly generated by $\Rel \cap A_s[A]$ (see~\cite{AKPR2}, the beginning of Section~$8.2$ for the formal definitions of $A_s[A]$, $\Low_s[A]$, $\Dp_s[A]$; see also Definition~$8.4$ and Lemma~$8.8$ for the explicit description of isomorphism). In particular, formula~\eqref{tensor_prod} holds for $A = Z_j$, where $Z_j$ is a monomial from Theorem~\ref{sc_ring_basis}.

Note that if $a$ is $s$-th virtual member of the chart $Z_j$, then by definition $a \in A_s[Z_j]$. If $Z \in \DMUp{Z_j}$ and $b$ is its $s$-th virtual member of the chart, then by construction the corresponding to $b$ monomial in $A_s[Z_j]$ is of the form $\widehat{b} = c_1\cdot b\cdot c_2$, where $c_1, c_2 \in \Sp$. In particular $\M(b) - 2 \leqslant \M(\widehat{b}) \leqslant \M(b) + 2$.

Let $a + \sum_i\alpha_i a_i \in \Rel$ and $A$ be a monomial of the form $LaR$, then the transition $A \mapsto -\sum_i\alpha_i La_iR$ is called \emph{a multi-turn}. If $A = LaR$ and $Z$ are monomials, $A \in \DMUp{Z}$, $a$ is $s$-th virtual member of the chart of $A$, and $a + \sum_i\alpha_i a_i \in \Dp_s[Z] \subseteq \langle\Rel\rangle$, then the transition $A \mapsto -\sum_i\alpha_i La_iR$ is called \emph{an iterated multi-turn}.

Now we construct a special basis of $k\Fr/ \Ideal$. In every space $A_s[Z_j]$ let us sort the monomials by $<_f$ in ascending order, that is, first we compare their $f$-characteristics, and for monomials with the same $f$-characteristics we compare them using $\deglex$-order. Now we construct a set $\mathcal{B}_{js}$ as follows: we run through this sequence of monomials and add a monomial to the set if its coset in $A_s[Z_j] / (\Low_s[Z_j] + \Dp_s[Z_j])$ is linearly independent with the previously added elements. As a result we obtain a basis of $A_s[Z_j] / (\Low_s[Z_j] + \Dp_s[Z_j])$, since $A_s[Z_j]$ is linearly generated by monomials. Then using the sets $\mathcal{B}_{js}$ together with~\eqref{tensor_prod} and Theorem~\ref{sc_ring_basis}, we obtain the set of monomials $\mathcal{B}$ such that $\mathcal{B} + \Ideal$ a basis of $k\Fr / \Ideal$. We call $\mathcal{B} + \Ideal$ \emph{the minimal basis with respect to $<_f$}. The minimal basis satisfy the following properties:

\begin{lemma}
\label{minimal_basis_decomp}
Let $\mathcal{B} + \Ideal$ be the minimal basis of $k\Fr/ \Ideal$ and $A\notin \mathcal{B}$ be a monomial. Then for the basis decomposition of $A + \Ideal = \sum_{i = 1}^t \alpha_i u_i + \Ideal$, $u_i \in \mathcal{B}$, we have that $u_i <_f A$.
\end{lemma}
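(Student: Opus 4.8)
The plan is to trace the recursive construction of the minimal basis and argue that every substitution step used to express $A + \Ideal$ in terms of basis monomials strictly lowers monomials in the order $<_f$. First I would reduce to the "local" picture provided by~\eqref{tensor_prod}: by Remark~\ref{f_char_decomp} the monomial $A$ lies in $\DMUp{Z_j}$ for a unique $Z_j$, so it suffices to understand how $A + (\Low\langle Z_j\rangle_d + \Dp\langle Z_j\rangle_d)$ decomposes, and under the tensor isomorphism $A$ corresponds to a pure tensor $\bigotimes_{s=1}^n \widehat{a}_s$ where $\widehat{a}_s \in A_s[Z_j]$ is the monomial attached to the $s$-th virtual member of the chart. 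Since $A \notin \mathcal{B}$, at least one factor $\widehat{a}_s$ is not in $\mathcal{B}_{js}$, hence by the greedy construction its coset is a linear combination of cosets of monomials that were added \emph{earlier} in the $<_f$-sorted sequence, i.e. of monomials $v$ with $v <_f \widehat{a}_s$; gathering the error terms, $\widehat{a}_s \equiv \sum_k \beta_k v_k \pmod{\Low_s[Z_j] + \Dp_s[Z_j]}$ with each $v_k <_f \widehat{a}_s$.

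Next I would propagate this one-factor replacement back up. Tensoring the identity for $\widehat{a}_s$ with the remaining (unchanged) factors and using that the isomorphism~\eqref{tensor_prod} sends $\Low_s + \Dp_s$ into $\Low\langle Z_j\rangle_d + \Dp\langle Z_j\rangle_d$, we get $A \equiv \sum_k \beta_k A_k \pmod{\Low\langle Z_j\rangle_d + \Dp\langle Z_j\rangle_d}$ where $A_k$ is the derived monomial of $Z_j$ obtained from $A$ by replacing the $s$-th factor $\widehat{a}_s$ by $v_k$. The key point is that $A_k <_f A$: the terms coming from $\Low_s$ have strictly smaller $f$-characteristic hence are $<_f A$ outright, while the terms with the same $f$-characteristic as $\widehat{a}_s$ still satisfy $v_k <_f \widehat{a}_s$ in the $\deglex$ refinement, and since $<_f$ on $\langle Z_j\rangle_d$ is built coordinatewise from the orders on the factors $A_s[Z_j]$ (Definition~10.1 of~\cite{AKPR2}, comparing $f$-characteristics lexicographically and then $\deglex$), replacing a single factor by a $<_f$-smaller one yields a $<_f$-smaller derived monomial. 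The contributions absorbed into $\Dp\langle Z_j\rangle_d$ are genuine relations $\sum \alpha_i L a_i R \in \Rel$ among derived monomials, and when we move them to the other side each $L a_i R$ appearing is again $<_f$-comparable; by the Small Cancellation and Isolation Axioms the "leading" such monomial is the one we started from, so all the others are strictly $<_f$-smaller.

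Finally I would close the induction: each $A_k$ is either in $\mathcal{B}$ already, in which case it contributes a basis monomial $u_i <_f A$, or it is not, in which case by the induction hypothesis (applied to $A_k$, which is legitimate because $A_k <_f A$ and $<_f$ is a linear — in fact well-founded on each $\DMUp{Z_j}$ — order) its basis decomposition involves only monomials $<_f A_k <_f A$. Collecting terms gives $A + \Ideal = \sum_i \alpha_i u_i + \Ideal$ with every $u_i <_f A$, as required. The main obstacle I anticipate is the bookkeeping in the second paragraph: one must verify carefully that passing through the isomorphism~\eqref{tensor_prod} genuinely preserves the order $<_f$ coordinatewise and that the monomials $\widehat{b} = c_1 \cdot b \cdot c_2$ with $c_1, c_2 \in \Sp$ appearing in $A_s[Z_j]$ do not disturb the comparison — this is where the explicit description of the isomorphism (Definition~8.4, Lemma~8.8 of~\cite{AKPR2}) and the estimate $\M(b) - 2 \leqslant \M(\widehat{b}) \leqslant \M(b) + 2$ get used, and where the definition of $<_f$ on the whole of $\Fr$ via the partition of Remark~\ref{f_char_decomp} has to be matched up with its restriction to a single tensor factor.
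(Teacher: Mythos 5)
Your proposal is correct and follows essentially the same route as the paper's proof: both reduce to the structure theorem and the greedy ($<_f$-ascending) construction of the minimal basis to replace a non-basis monomial by $<_f$-smaller basis monomials at the same $f$-level, and handle the strictly lower $f$-characteristic remainder (the $\Low$ part, the $\Dp$ part vanishing mod $\Ideal$) by descent along the well-founded order $<_f$. The paper simply compresses your tensor-factor bookkeeping into the phrase ``by construction of minimal basis and definition of $<_f$, we see that $q_i <_f A$,'' so the point you flag as the main obstacle is exactly the point the paper asserts without further detail.
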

\begin{proof}
First assume that $\langle A \rangle_d / (\Low\langle A\rangle_d + \Dp\langle A\rangle_d)$ is trivial. Then $A + \Ideal$ is a linear combination of monomials with $f$-characteristics $< f(A)$. We continue in the same way with these monomials and finally obtain that $A + \Ideal = \sum_{i}\beta_i W_i + \Ideal$, where $\langle W_i \rangle_d / (\Low\langle W_i\rangle_d + \Dp\langle W_i\rangle_d)$ are non-trivial.

So it remains to consider the case when $\langle A \rangle_d / (\Low\langle A\rangle_d + \Dp\langle A\rangle_d)$ is non-trivial. Then $A \in \DMUp{Z_j}$ for one of $Z_j$ from Theorem~\ref{sc_ring_basis}. We obtain that $A + \Ideal = \sum \alpha_i q_i + \sum \gamma_i z_i + \Ideal$, where $q_i \in \mathcal{B}\cap\DMUp{Z_j}$, and $f(z_i) < f(A)$. By construction of minimal basis and definition of $<_f$, we see that $q_i <_f A$, so the claim follows.
\end{proof}

\begin{remark}
\label{sc_strong}
Let $\theta \geqslant \tau + 1$ be a constant. In Section~\ref{non_amenability_sec} we will use stronger version of Small Cancellation Axiom, namely we will require that every element from the linear span of $\Rel$ contains after additive cancellations a monomial of $\M$-measure $\theta$.
\end{remark}

\begin{lemma}
\label{short_basis_elements}
Assume that Small Cancellation Axiom for $\Rel$ holds with $\theta \geqslant \tau + 1$ (cf. Remark~\ref{sc_strong}). Then all monomials in $A_s[Z_j] \setminus \Low_s[Z_j]$ of $\M$-measure $\leqslant \theta - 1$ belong to the corresponding set $\mathcal{B}_{js}$ of the minimal  basis.
\end{lemma}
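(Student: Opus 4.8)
The plan is to argue by contradiction, exploiting the greedy way in which $\mathcal{B}_{js}$ is assembled. Suppose some $\widehat b\in A_s[Z_j]\setminus\Low_s[Z_j]$ with $\M(\widehat b)\le\tau+9$ fails to be placed into $\mathcal{B}_{js}$, and pick such a $\widehat b$ that is $<_f$-minimal. Since $\widehat b$ is discarded when the construction reaches it, its coset in $A_s[Z_j]/(\Low_s[Z_j]+\Dp_s[Z_j])$ already lies in the span of the cosets of the monomials preceding $\widehat b$ in the order $<_f$. Writing this relation out and absorbing the summand lying in $\Low_s[Z_j]$, one obtains a nonzero element $d\in\Dp_s[Z_j]$ in which $\widehat b$ occurs with coefficient $1$ while every other monomial of $d$ is $<_f\widehat b$; in other words, $\widehat b$ is the $<_f$-leading monomial of $d$. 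So it suffices to show that the $<_f$-leading monomial of any nonzero element of $\Dp_s[Z_j]$ has $\M$-measure at least $\tau+10$.

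To get at the $\M$-measure I would invoke the Small Cancellation Axiom. By definition $\Dp_s[Z_j]$ is linearly spanned by the images in $A_s[Z_j]$ of defining relations --- each image a combination of monomials $c_1\cdot a_i\cdot c_2$, $c_1,c_2\in\Sp$, coming from a relation $\sum_i\alpha_i a_i\in\Rel$ (the remark preceding the statement) --- so $d$ is a non-trivial linear combination of multiples of defining relations, and hence lies in the span of $\Add(\Rel)$. Since $\Add(\Rel)$ inherits the Small Cancellation Axiom in the strengthened form that is assumed (with $\tau+10$ in place of $\tau+1$), the nonzero element $d$ contains a monomial, with nonzero coefficient, admitting no representation as a product of fewer than $\tau+10$ small pieces --- possibly no representation as a product of small pieces at all --- and that monomial of $d$ has $\M$-measure at least $\tau+10$. (Instead of passing through $\Add(\Rel)$ one can de-shift $d$ to the combination $\sum_l\gamma_l q_l\in k\Fr$ of the underlying relations, to which the Small Cancellation Axiom applies directly; this is cheaper but leaves one tracking the at-most-two small pieces that the shift adds at each end.)

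It remains to identify this complicated monomial with the $<_f$-leading monomial $\widehat b$ of $d$. Here I would use that $<_f$ is the $f$-characteristic refined by $\deglex$, together with the fact that the monomials occurring in a single defining relation are all of comparable size, so that the $<_f$-leading --- essentially $\deglex$-longest --- monomial of $d$ is itself one of the monomials the Small Cancellation Axiom forces to be complicated; with no distortion to account for, ``at least $\tau+10$ small pieces'' is then exactly what is needed to exceed $\tau+9$. This yields $\M(\widehat b)\ge\tau+10$, contradicting $\M(\widehat b)\le\tau+9$, and finishes the proof.

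The step I expect to be the real obstacle is this last one: the Small Cancellation Axiom only asserts that \emph{some} monomial of the combination is complicated, whereas what is needed is that the \emph{$<_f$-leading} monomial of $d$ is complicated. Establishing this --- in effect, that no nonzero element of $\Dp_s[Z_j]$ can have a ``simple'' leading monomial --- is where the finer apparatus must be used: the precise interplay of $<_f$, the $f$-characteristic and the $\M$-measure on derived monomials of $Z_j$, the Isolation Axiom, and the fact that $\Add(\Rel)$ satisfies the small cancellation axioms. Verifying that $\Add(\Rel)$ indeed carries the strengthened axiom, and the bookkeeping in the de-shifting variant, are the remaining technical points.
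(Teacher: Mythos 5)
Your opening reduction matches the paper's: the greedy construction of $\mathcal{B}_{js}$ means a discarded monomial $u$ with $\M(u)\leqslant\tau+9$ satisfies $u-\sum\alpha_iw_i\in\Low_s[Z_j]+\Dp_s[Z_j]$ with all $w_i\in\mathcal{B}_{js}$ accepted earlier, hence $f(w_i)\leqslant f(u)$. But from there you reduce the lemma to a strictly stronger claim --- that the $<_f$-\emph{leading} monomial of any nonzero element of $\Dp_s[Z_j]$ has $\M$-measure $\geqslant\tau+10$ --- and this is exactly where the argument breaks, as you yourself suspect. The Small Cancellation Axiom only produces \emph{some} complicated monomial in the combination, and your supporting heuristic that ``the monomials occurring in a single defining relation are all of comparable size'' is false: a relation $\sum_i\alpha_ia_i\in\Rel$ can have one monomial of $\M$-measure $\geqslant\tau+8$ while all the others have measure $\leqslant 2$ (this is precisely the configuration analysed in Lemma~\ref{f_order_prop}). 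So the $\deglex$-longest, hence potentially $<_f$-leading, monomial need not be the complicated one, and the stronger claim is not available.

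The paper's proof never needs to locate the complicated monomial at the top of the order. It takes the monomial of $\M$-measure $\geqslant\tau+10$ supplied by the Small Cancellation Axiom inside the $\Dp_s[Z_j]$-component and observes that such a monomial corresponds to a virtual member of the chart, so it cannot be cancelled against anything in $\Low_s[Z_j]$; it must therefore survive as one of the monomials $u,w_1,\ldots,w_n$. It cannot be $u$, since $\M(u)\leqslant\tau+9$, so some accepted $w_i$ has $\M(w_i)\geqslant\tau+10$ --- and this is incompatible with $f(w_i)\leqslant f(u)$, which the greedy construction guarantees. In short, the contradiction is extracted from \emph{any} complicated monomial, by ruling out where else it could sit, rather than by arguing that the leading monomial is complicated. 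To repair your write-up you should replace the ``leading monomial is complicated'' step by this containment argument; as it stands, the key step of your proof is both unproved and resting on a false premise.
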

\begin{proof}
Assume by the contrary, that a monomial $u \in A_s[Z_j] \setminus \Low_s[Z_j]$, $\M(u) \leqslant \theta - 1$, and $u \notin \mathcal{B}_{js}$. Then $u = \sum \alpha_i w_i \mod \Low_s[Z_j] + \Dp_s[Z_j]$, where $w_i \in \mathcal{B}_{js}$. Hence $\sum \alpha_i w_i - u \in \Low_s[Z_j] + \Dp_s[Z_j]$. By Small Cancellation Axiom every polynomial in $\Dp_s[Z_j]$ contains a monomial of $\M$-measure $\geqslant \theta$. Such a monomial can not belong to $\Low_s[Z_j]$, because it always corresponds to a virtual member of the chart. Hence there exists $w_i$ with $\M(w_i) \geqslant \theta$. However, by construction of the minimal basis, $f(w_i) \leqslant f(u)$, a contradiction.
\end{proof}

Let $A$ be a monomial. For the further convenience we describe explicitly the process of the basis decomposition of $A + \Ideal$ with respect to the basis~$\mathcal{B} + \Ideal$:
\begin{itemize}
\item
Assume that $\langle A \rangle_d / (\Low\langle A\rangle_d + \Dp\langle A\rangle_d)$ is trivial. Then it follows from~\eqref{tensor_prod} that there exists~$a$ a virtual member of the chart of $A = LaR$, and $Q = a + \sum_i \alpha_i a_i \in \Dp_s[A]$ such that $f(La_iR) < f(A)$ for all $i$. We take the leftmost such $a$ and make the transition $A = LaR \mapsto -\sum_i \alpha_i La_iR$, and check whether $\langle La_iR \rangle_d / (\Low\langle La_iR\rangle_d + \Dp\langle La_iR\rangle_d)$ are trivial. For the trivial ones we repeat the current step, for non-trivial ones we move to the next step.
\item
Assume that $\langle A \rangle_d / (\Low\langle A\rangle_d + \Dp\langle A\rangle_d)$ is non-trivial. Then $A \in \DMUp{Z_j}$ for one of $Z_j$ from Theorem~\ref{sc_ring_basis}. Then we take $a$ the leftmost virtual member of the chart of~$A$ that requires decomposition and process the corresponding iterated multi-turn of~$A$. For the resulting monomials we return either to the first step, or proceed to the second step.
\end{itemize}
After every iteration of the above process we either decrease $f$-characteristics of monomials, or shift to the right in the chart of monomials with the same $f$-characteristics and do not add new virtual members of the chart in the latter case. Hence the process converges.

\begin{lemma}
\label{derived_monomials_in_decomp}
Assume that Small Cancellation Axiom for $\Rel$ holds with $\theta \geqslant \tau + 1$ (cf. Remark~\ref{sc_strong}). Let $LaR \mapsto  -\sum_i \alpha_i La_iR$ be some step of the basis decomposition stated above of some monomial. Then $\M(a) \geqslant \theta - 2$ and for every~$i$ there exist $c_0, \ldots, c_t \in \Mon$, $c_0 = a$, $c_t = a_i$ ($c_j$ and $t$ depend on $a_i$), such that $c_j, c_{j + 1}$ are incident monomials and $\M(c_j) \geqslant \theta - 2$ for all $0 \leqslant j < t$.
\end{lemma}
\begin{proof}
Let $A = LaR$ and denote the iterated multi-turn $LaR \mapsto  -\sum_i \alpha_i La_iR$ by $\Phi$. Assume by the contrary, that $\M(a) \leqslant \theta - 3$. By Lemma~\ref{short_basis_elements}, the transformation $\Phi$ can not appear when $\langle A \rangle_d / (\Low\langle A\rangle_d + \Dp\langle A\rangle_d)$ is non-trivial. So assume that $\langle A \rangle_d / (\Low\langle A\rangle_d + \Dp\langle A\rangle_d)$ is trivial. Since in this case all $f(La_iR) < f(LaR)$, all $a_i$ have $\M$-measure $\leqslant \tau - 1$. However by Small Cancellation Axiom there exists $a_i$ with $\M(a_i) \geqslant \theta$, a contradiction.

Consider $\sum_j \gamma_j T_j$, where $T_j \in \Rel$ and denote by $\mathcal{T}$ the set of all polynomials $T_j$ that participate in this linear combination. By Small Cancellation Axiom every $T_j$ contains monomials with $\M$-measure $\geqslant \theta$. Consider a partition of $\mathcal{T} = \bigsqcup_l \mathcal{T}_l$ as follows: if two polynomials from $\mathcal{T}$ belong to different sets, then they do not have common monomials of $\M$-measure $\geqslant \theta$. We fix the smallest possible such partition (with the smallest possible sets, see Remark~\ref{partition_rem}). Then the construction implies that if $z_1, z_2$ are different monomials that participate in polynomials form the same set $\mathcal{T}_l$, then there exist $u_0, \ldots, u_t \in \Mon$, $u_0 = z_1$, $u_t = z_2$, such that $u_j, u_{j + 1}$ are incident monomials and $\M(u_j) \geqslant \theta$ for all $0 < j < t$.

By definition $a + \sum_i \alpha_i a_i = \sum_j \gamma_j T_j$, where $T_j \in \Rel$ and pairwise different. By Small Cancellation Axiom every element in the linear span of $\mathcal{T}_l$ contains a monomial with $\M$-measure $\geqslant \theta$. Therefore if $f(La_iR) < f(LaR)$ for all $i$, then there exits only one set $\mathcal{T}_l$ in the above partition. Hence if $\langle A \rangle_d / (\Low\langle A\rangle_d + \Dp\langle A\rangle_d)$ is trivial, we are done.

Now assume that $\langle A \rangle_d / (\Low\langle A\rangle_d + \Dp\langle A\rangle_d)$ is non-trivial. Then $A \in \DMUp{Z_m}$ for one of $Z_m$ from Theorem~\ref{sc_ring_basis}. Let $\widetilde{a}, \widetilde{a}_i$ be elements of $A_s[Z_m]$ that correspond to $a, a_i$, respectively. Then $\widetilde{a} + \sum_i \alpha_i \widetilde{a}_i \in \Dp_s[Z_m]$. Hence $\widetilde{a} + \sum_i \alpha_i \widetilde{a}_i = \sum_j \gamma_j T_j$, where $T_j \in \Rel$. By Lemma~\ref{short_basis_elements}, we have that $\M(\widetilde{a}) \geqslant \theta$. Hence $\widetilde{a}$ appear in polynomials only in one set $\mathcal{T}_{l_0}$ of the corresponding partition. We again apply Lemma~\ref{short_basis_elements} and see that all monomials that are not cancelled in the linear combination $\sum_j \gamma_j T_j$ and participate in the polynomials from $\mathcal{T}_{l_0}$ belong to $\mathcal{B}_{ms}$. Therefore the partition consists of the single set~$\mathcal{T}_{l_0}$. We have that $a + \sum_i \alpha_i a_i = \sum_j \gamma_j c_1\cdot T_j \cdot c_2$, where $c_1, c_2$ are small pieces. Thus we are done.
\end{proof}

\begin{remark}
\label{partition_rem}
The formal definition of the partition $\mathcal{T} = \bigsqcup_l \mathcal{T}_l$ in fact says the following. We take a polynomial $T_1 \in \mathcal{T}$. If there exist any polynomials in $\mathcal{T}$ that have common monomials with $T_1$ of $\M$-measure $\geqslant \theta$, we join them with $T_1$ to one preparatory set $\mathcal{T}_1'$. If in $\mathcal{T} \setminus \mathcal{T}_1'$ there exist polynomials that have common monomials with some elements from $\mathcal{T}_1'$ of $\M$-measure $\geqslant \theta$, we join them with $\mathcal{T}_1'$. We repeat the process while it is possible and as a final result obtain the set $\mathcal{T}_1$. If $\mathcal{T} \setminus \mathcal{T}_1 \neq \varnothing$, we take a polynomial from there and repeat the above process for it, etc.. The final result is the required partition.
\end{remark}

\section{Construction of a free subalgebra}

Recall that the set of small pieces~$\Sp$ is closed under taking subwords. If $\Sp^{\prime} \supseteq \Sp$ is some extension of the set of small pieces such that $\Sp^{\prime}$ is closed under taking subwords, then in the same way as $\M$-measure defined with respect to $\Sp$, one can define $\M^{\prime}$-measure with respect to $\Sp^{\prime}$ and state Small Cancellation Axiom and Isolation Axiom with respect to $\M^{\prime}$-measure. If $\Rel$ satisfies these new axioms (still together with Compatibility Axiom), then the small cancellation theory for rings works with $\M^{\prime}$-measure as well as it works with $\M$-measure. In particular, Lemma~\ref{empty_chart_words} holds with $\M^{\prime}$-measure.

From now on we fix a special~$\Sp^{\prime}$ which consists of $\Sp$ and all letters that belong to $\Mon$. Then clearly~$\Sp^{\prime}$ is closed under taking subwords, and in what follows we use $\M^{\prime}$-measure with respect to $\Sp^{\prime}$.

\begin{lemma}
\label{small_accumulation}
Let $w_1, w_2$ be cyclically reduced words such that $w_1, w_2 \notin \Mon$, $w_1\cdot w_2$ and $w_2\cdot w_1$ have no cancellations. Assume that $w_1, w_2$ do not contain maximal occurrences of $\Mp$-measure $> \mu$. Then every word of the form $w_1^{\alpha_1}w_2^{\beta_1}\ldots$, where $\alpha_i, \beta_i$ are non-negative integers, does not contain maximal occurrences of $\Mp$-measure $> 2\mu$.
\end{lemma}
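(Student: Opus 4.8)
The plan is to analyze an arbitrary maximal occurrence in a word $W = w_1^{\alpha_1}w_2^{\beta_1}w_1^{\alpha_2}\cdots$, show it can meet at most two consecutive copies of $w_1$ or $w_2$, and then add up the $\Mp$-contributions of those two copies. First I would record that $W$ is freely reduced: since $w_1$ and $w_2$ are cyclically reduced there is no cancellation in $w_1\cdot w_1$ or $w_2\cdot w_2$, and by hypothesis there is none in $w_1\cdot w_2$ or $w_2\cdot w_1$, so every junction between consecutive factors is cancellation-free and $W$ is literally the concatenation of the listed factors. Hence I may speak of the block decomposition of $W$ into single copies of $w_1$ and $w_2$, and every subword of $W$ is an honest substring.

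Let $u$ be a maximal occurrence of an element of $\Mon$ in $W$. The key step is the observation that the occurrence of $u$ overlaps at most two consecutive blocks: otherwise some whole intermediate block — a copy of $w_1$ or of $w_2$ — would be a subword of $u\in\Mon$, and since $\Mon$ is closed under subwords (Compatibility Axiom) this would force $w_1\in\Mon$ or $w_2\in\Mon$, contradicting $w_1,w_2\notin\Mon$. If $u$ lies inside a single block $w_i$, then $u$ is a subword of $w_i$, hence is contained in a maximal occurrence of $\Mon$ in $w_i$, so the hypothesis gives $\Mp(u)\leqslant\mu\leqslant 2\mu$. Otherwise $u$ straddles a boundary between consecutive blocks $w_i$ and $w_j$, and I write $u = u'u''$ with $u'$ a nonempty suffix of $w_i$ and $u''$ a nonempty prefix of $w_j$; there is no cancellation between $u'$ and $u''$ because $u$ sits inside the reduced word $W$.

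It remains to bound $\Mp(u)$ in this last case. Both $u'$ and $u''$ are subwords of $u$, hence lie in $\Mon$. The occurrence of $u'$ as a suffix of $w_i$ is contained in some maximal occurrence $\bar u'$ of $\Mon$ in $w_i$; being a subword of $w_i$ that contains a suffix of $w_i$, this $\bar u'$ is again a suffix of $w_i$, so the hypothesis gives $\Mp(\bar u')\leqslant\mu$. Since $\Sp^{\prime}$ is closed under subwords, $\Mp$ is non-increasing under passage to subwords, whence $\Mp(u')\leqslant\mu$; symmetrically $\Mp(u'')\leqslant\mu$. Finally $\Mp$ is subadditive under cancellation-free concatenation (splice the two small-piece factorizations together), so $\Mp(u)=\Mp(u'u'')\leqslant\Mp(u')+\Mp(u'')\leqslant 2\mu$. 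As $u$ was an arbitrary maximal occurrence in $W$, the lemma follows.

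This uses no small cancellation machinery beyond the Compatibility Axiom; the only steps requiring (routine) care are the monotonicity and subadditivity of $\Mp$, which is exactly where the closure of $\Sp^{\prime}$ under subwords enters, together with the elementary fact that a subword of $w_i$ containing a prefix (resp.\ suffix) of $w_i$ is again a prefix (resp.\ suffix). The genuine idea is the single observation that a maximal occurrence cannot swallow a whole block, so I would consider that the crux rather than a real obstacle.
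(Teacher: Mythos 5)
Your proof is correct and follows essentially the same route as the paper: the crux in both is that a maximal occurrence cannot contain a whole copy of $w_1$ or $w_2$ (else $w_1$ or $w_2$ would lie in $\Mon$ by closure under subwords), so it sits inside two consecutive blocks and its $\Mp$-measure is bounded by $2\mu$. You merely spell out the monotonicity and subadditivity of $\Mp$ that the paper leaves implicit in its final ``Hence.''
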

\begin{proof}
Let $u$ be a maximal occurrence in $w_1^{\alpha_1}w_2^{\beta_1}\ldots$, $\alpha_i, \beta_i \geqslant 0$. Since $w_1, w_2 \notin \Mon$ and $\Mon$ is closed under taking subwords, we obtain that $u$ is properly contained either in $w_1^2$, or in $w_2^2$, or in $w_1w_2$, or in $w_2w_1$. Hence, $\Mp(u) \leqslant 2\mu$.
\end{proof}

\begin{lemma}
\label{small_increasing}
Let $w \notin \Mon$. Let $x$ be a single letter such that either $x \in \Sp'$, or $x\notin \Mon$. Assume that all maximal occurrences in $w$ have $\Mp$-measure $\leqslant \mu$. Then all maximal occurrences in $x\cdot w$ and $w\cdot x$ have $\Mp$-measure $\leqslant \mu + 1$. Moreover, if $x\cdot w$ have no cancellations, then $xw \notin \Mon$; if $w\cdot x$ have no cancellations, then $wx \notin \Mon$.
\end{lemma}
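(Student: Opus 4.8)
The plan is to split the argument according to whether the relevant product is cancellation-free or not; I treat $x\cdot w$, the case $w\cdot x$ being symmetric. First I would dispose of the ``moreover'' part, which is immediate: if $x\cdot w$ has no cancellation then $w$ is a subword of $xw$, so since $\Mon$ is closed under taking subwords (Compatibility Axiom), $xw\in\Mon$ would force $w\in\Mon$, against the hypothesis $w\notin\Mon$; hence $xw\notin\Mon$, and $wx\notin\Mon$ by the mirror argument.

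For the measure bound, suppose first that $x\cdot w$ has no cancellation, so that $xw$ is reduced of length $\abs{w}+1$. I would take an arbitrary maximal occurrence $u$ of $xw$, so $u\in\Mon$, and distinguish two cases. If $u$ does not involve the leading letter $x$, then $u$ lies in the copy of $w$ and any prolongation of it inside $w$ is also a prolongation inside $xw$; hence this occurrence is already maximal in $w$, so $\Mp(u)\leqslant\mu\leqslant\mu+1$. If $u$ does involve $x$, then, being a subword occurrence that contains the letter at position $1$, it has the form $u=xu'$ with $u'$ a (possibly empty) prefix of $w$. When $x\notin\Mon$ this is impossible, since then $x$ would be a subword of $u\in\Mon$; so I may assume $x\in\Sp'$. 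The occurrence of $u'\in\Mon$ at the start of $w$ is contained in some maximal occurrence $\bar u$ of $w$, which, also starting at the first letter of $w$, has $u'$ as a prefix; then $\Mp(u)=\Mp(xu')\leqslant 1+\Mp(u')\leqslant 1+\Mp(\bar u)\leqslant\mu+1$.

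The step used twice above is the elementary monotonicity of $\Mp$ under passing to subwords: a subword of a concatenation of $t$ elements of $\Sp'$ is again a concatenation of at most $t$ elements of $\Sp'$ (the two boundary pieces get truncated, and $\Sp'$ is closed under subwords), so $\Mp(v')\leqslant\Mp(v)$ whenever $v'$ is a subword of $v$ and $\Mp(v)<\infty$. In the remaining case, when $x\cdot w$ does have a cancellation, the first letter of $w$ is $x^{-1}$ and $x\cdot w$ equals the subword $w''$ of $w$ obtained by deleting that letter; a maximal occurrence $u$ of $w''$ is in particular an occurrence of an element of $\Mon$ inside $w$, hence is contained in a maximal occurrence $\bar u$ of $w$, and since $u$ admits no prolongation within $w''$, the only way $\bar u$ can strictly contain $u$ is by adjoining the deleted first letter on the left; in every case $u$ is a subword of $\bar u$, so $\Mp(u)\leqslant\Mp(\bar u)\leqslant\mu$. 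The assertions about $w\cdot x$ follow by the same argument with the roles of ``first'' and ``last'' exchanged.

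I do not expect a genuine obstacle here. The only things needing care are the bookkeeping of which maximal occurrences of the modified word correspond to maximal occurrences of $w$ --- the point being that a prolongation available on the side away from the modified letter stays available after the modification --- and the subword monotonicity of $\Mp$, which reduces to $\Sp'$ being closed under subwords.
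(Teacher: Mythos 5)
Your proof is correct and follows essentially the same route as the paper's (much terser) argument: the letter $x$ can extend a maximal occurrence by at most one element of $\Sp'$ (or cannot be part of any occurrence at all if $x\notin\Mon$), and the ``moreover'' part is exactly the closure of $\Mon$ under subwords. Your extra care with the cancellation case and with subword monotonicity of $\Mp'$ is sound and fills in details the paper leaves implicit.
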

\begin{proof}
If $x$ prolongs some maximal occurrence, it can prolong it by at most one small piece, since $x \in \Sp'$, so the first part follows.

The second part follows from the fact that $\Mon$ is closed under taking subwords.
\end{proof}

\begin{remark}
Clearly, in order to have a free subalgebra in the quotient ring $k\Fr / \llangle \Rel\rrangle$, one needs to avoid having a very big set of relators $\Rel$. The conditions of Theorem~\ref{free_subelgebra_exists} provide one possible to do that.
\end{remark}

\begin{lemma}
\label{free_subalgebra_generators}
Let $\Fr$ be a non-cyclic free group. Let $\Rel \subseteq k\Fr$ satisfy Compatibility Axiom. Assume moreover that there exist non-empty reduced words that do not belong to $\Mon$, and $\Rel$ does not contain relations of the form $u = 0$, where $u$ is a monomial. Let $\Sp^{\prime}$ be the union of $\Sp$ and all letters that belong to $\Mon$. Then there exist two words $w_1, w_2 \notin \Mon$ such that they are cyclically reduced, there are no cancellations in $w_1\cdot w_2$, $w_2\cdot w_1$, the words $w_1, w_2$ do not contain occurrences of $\Mp$-measure $> 4$ and do not have common suffix. Furthermore we always can choose $w_1, w_2$ such that they contain some occurrences from $\Mon$.
\end{lemma}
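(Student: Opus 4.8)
The plan is to build $w_1$ and $w_2$ from one short word outside $\Mon$, padded by at most two extra letters, keeping the $\Mp$-measure of maximal occurrences under control with Lemma~\ref{small_increasing}, and using that a non-cyclic $\Fr$ has at least four letters to arrange simultaneously cyclic reducedness, absence of cancellation in $w_1\cdot w_2$ and $w_2\cdot w_1$, and distinct final letters (which is exactly the ``no common suffix'' requirement). The first step is a reduction: it suffices to produce a cyclically reduced word $w\notin\Mon$ that contains an occurrence of an element of $\Mon$ and all of whose maximal occurrences have $\Mp$-measure at most $2$. Indeed, given such a $w$, set $w_1:=w$ and $w_2:=w\cdot c_1c_2$, with letters $c_1,c_2$ picked so that $w\cdot c_1c_2$ is reduced, $c_2$ is not the final letter of $w$, and $c_2$ is not the inverse of the initial letter of $w$; only boundedly many values are excluded, so such $c_1,c_2$ exist. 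Then two applications of Lemma~\ref{small_increasing} give $w_2\notin\Mon$ with maximal occurrences of $\Mp$-measure at most $4$, $w_2$ inherits an occurrence of $\Mon$, both words are cyclically reduced, $w_1\cdot w_2=w\,w\,c_1c_2$ and $w_2\cdot w_1=w\,c_1c_2\,w$ are reduced by cyclic reducedness of $w$ and the choice of $c_1,c_2$, and their final letters differ.

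To build such a $w$, we may assume $\Mon\neq\emptyset$ (otherwise the last clause of the lemma is vacuous and any two generators work for the rest); being closed under subwords, $\Mon$ then contains a letter $y_0\in\Sp^{\prime}$. Let $v$ be a reduced word of minimal length not in $\Mon$, so every proper subword of $v$ lies in $\Mon$. If $\abs{v}=1$, say $v=x_0$, take $w:=y_0x_0$, or, if that is not reduced, $w:=y_0c$ or $w:=cx_0$ for a suitable generator $c\neq x_0^{\pm1}$ (available since $\Fr$ is non-cyclic): then $w$ is cyclically reduced, outside $\Mon$, contains the occurrence of $y_0$ (or of $c$), and its single maximal occurrence has $\Mp$-measure $1$. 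If $\abs{v}=2$, then $v$ is automatically cyclically reduced and all letters lie in $\Mon$; take $w:=v$, whose only maximal occurrences are single letters of $\Mp$-measure $1$. If $\abs{v}=3$ and $v$ is cyclically reduced, take $w:=v$ again: its maximal occurrences are the two length-two subwords of $v$, of $\Mp$-measure at most $2$. In each of these cases the reduction applies and produces $w_1,w_2$ containing occurrences of $\Mon$.

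The remaining case is where every reduced word of length $\le2$ and every cyclically reduced word of length $3$ already lies in $\Mon$; this is where I expect the real difficulty. If $v$ is a (necessarily non-cyclically-reduced) word of length $3$, so $v=x_1x_2x_1^{-1}$, then instead take $w_1:=vc$ and $w_2:=vc'$ for distinct letters $c,c'\notin\{x_1^{\pm1}\}$: these are cyclically reduced, lie outside $\Mon$, their products have no cancellation (the junctions are of the form $c\cdot x_1$), and their maximal occurrences have $\Mp$-measure at most $3$, so they satisfy the lemma directly. The genuinely hard subcase is that a shortest word $v\notin\Mon$ has $\abs{v}\geq4$: then a subword of $v^k$ lying in $\Mon$ is contained in some $v^2$ and contains no full copy of $v$, hence has length at most $2\abs{v}-2>4$, and no word naively assembled from $v$ keeps the measure $\leq4$. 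I would resolve this by showing that $\abs{v}\leq3$ is forced, which is the one point that uses the two remaining hypotheses: that $\Rel$ has no monomial relation $u=0$ and that $\Rel$ is closed under the Compatibility Axiom. Starting from a polynomial $p\in\Rel$ with at least two monomials, all in $\Mon$, and running the letter-cancellation closure of the Compatibility Axiom so as to strip common prefixes and suffixes of two of these monomials, one controls which bounded-length words are forced into $\Mon$; combined with the hypothesis that not every reduced word lies in $\Mon$, this should exhibit a reduced word of length at most $3$ outside $\Mon$. Carrying out this length estimate, and settling the final-letter bookkeeping in the length-three case, is where essentially all the remaining effort goes.
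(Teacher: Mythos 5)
Your overall architecture (find a seed word $w\notin\Mon$ that contains an occurrence of $\Mon$ and whose maximal occurrences all have small $\Mp$-measure, then pad by one or two letters via Lemma~\ref{small_increasing} to obtain $w_1,w_2$) matches the paper's, and your padding and suffix bookkeeping are fine. The gap is in producing the seed word. You take $v$ of minimal length outside $\Mon$ and propose to show $\abs{v}\leqslant 3$; but nothing in the hypotheses bounds the length of the shortest reduced word outside $\Mon$. The set $\Mon$ is closed under subwords and may perfectly well contain every reduced word of length, say, $100$ (for instance when the relators are long words whose short subwords exhaust all short words of $\Fr$); then your minimal $v$ is long, all its proper subwords lie in $\Mon$ and may have large $\Mp$-measure, and your case analysis never reaches a usable word. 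So the step you defer (``showing that $\abs{v}\leqslant 3$ is forced'') is not a missing computation but a false claim, and the Compatibility Axiom together with the absence of monomial relations cannot rescue it.

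The paper avoids this entirely. Starting from an arbitrary $w\notin\Mon$, it splits into two cases. If $w$ contains occurrences of $\Mon$, it extracts a subword $w'$ that is still outside $\Mon$, still meets $\Mon$, but intersects each maximal occurrence of $w$ only in a short piece --- here one uses that the overlap of two distinct maximal occurrences is a small piece, so a window around (part of) a single maximal occurrence keeps all $\Mp$-measures $\leqslant 2$. If $w$ contains no occurrence of $\Mon$ at all, the paper multiplies a high power $M$ of $w$ by a relator $p=\sum_i\alpha_ia_i$, applies the Compatibility Axiom to strip the cancelled suffix $M_2$, and obtains a reduced word $M_1b_{i_0}$ that is outside $\Mon$ (it contains a full copy of $w$) yet contains the occurrence $b_{i_0}\in\Mon$; this reduces to the first case. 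Your proposal has no substitute for either mechanism, so the central existence claim remains unproved.
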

\begin{proof}
Let $w$ be a non-empty reduced word that do not belong to $\Mon$. First assume that $w$ contains some occurrences from $\Mon$. Then there exists a subword $w'$ of $w$ such that $w' \notin \Mon$, it contains some occurrences of $\Mon$, and does not contain occurrences of $\Mp$-measure $> 2$. Indeed, a possible overlap of two different maximal occurrences is a small piece. Hence, considering the set of all maximal occurrences in $w$ (which is non-empty by the initial assumption), one can easily find such a subword. See the picture below ($\lbrace u_i\rbrace_{i = 1}^{k}$ is the set of all maximal occurrences in $w$ enumerated from left to right).

\begin{center}
\begin{tikzpicture}
\draw[|-|] (0, 0) to node[below, at start] {$w$} (7, 0);
\draw[|-|, thick] (2, 0) to node[below, midway] {$u_1$} (5, 0);
\draw [thick, decorate, decoration={brace, amplitude=8pt, raise=4pt}, red] (1, 0) to node[midway, above, yshift=10pt] {$w'$} (4, 0);
\end{tikzpicture}

\begin{tikzpicture}
\draw[|-|] (0, 0) to node[below, at start] {$w$} (7, 0);
\draw[|-|, thick] (2, 0) to node[below, midway] {$u_k$} (5, 0);
\draw [thick, decorate, decoration={brace, amplitude=8pt, raise=4pt}, red] (3, 0) to node[midway, above, yshift=10pt] {$w'$} (6, 0);
\end{tikzpicture}

\begin{tikzpicture}
\draw[|-|] (0, 0) to node[below, at start] {$w$} (7, 0);
\draw[|-|, thick] (0, 0) to node[below, midway] {$u_1$} (3, 0);
\draw[|-|, thick] (3.5, 0) to node[below, midway] {$u_2$} (5.5, 0);
\draw [thick, decorate, decoration={brace, amplitude=8pt, raise=4pt}, red] (1, 0) to node[midway, above, yshift=10pt] {$w'$} (4.5, 0);
\end{tikzpicture}

\begin{tikzpicture}
\draw[|-|] (0, 0) to node[below, at start] {$w$} (7, 0);
\draw[|-|, thick] (0, 0) to node[below, midway] {$u_1$} (3.5, 0);
\draw[|-|, thick] (2.5, -0.1) to node[below, midway] {$u_2$} (5, -0.1);
\draw [thick, decorate, decoration={brace, amplitude=8pt, raise=4pt}, red] (1, 0) to node[midway, above, yshift=10pt] {$w'$} (4.3, 0);
\end{tikzpicture}
\end{center}
If $w'$ is not cyclically reduced, then let $z$ be a letter such that $w'z$ is reduced and cyclically reduced. Such letter exists because $\Fr$ has at least~$2$ free generators. Since $\Sp^{\prime}$ contains all letters from $\Mon$, we have that either $z\notin \Mon$, or $z \in \Sp'$. Hence by Lemma~\ref{small_increasing} the word $w'z$ does not contain maximal occurrences of $\Mp$-measure $> 3$ and $w'z \notin \Mon$. So further we assume that $w'$ is cyclically reduced and does not contain maximal occurrences of $\Mp$-measure $> 3$.

Since $w'$ is cyclically reduced, there are two possibilities: $w' = a\widetilde{w}a$, or $w' = a_1\widetilde{w}a_2$, where $a, a_1, a_2$ are letters, $a_1 \neq a_2^{\pm 1}$. In the first case let $x$ be a letter different from $a^{\pm 1}$, in the second case $x = a_1$. Such letter exists because $\Fr$ has at least~$2$ free generators. Then $w'\cdot x$ has no cancellations and $w'x$ is cyclically reduced. In the same way as above we obtain that $w'x$ does not contain maximal occurrences of $\Mp$-measure $> 4$. By construction $w'x\cdot w'$ and $w'\cdot w'x$ have no cancellations and $w', w'x$ have no common suffix, so we put $w_1 = w'$ and $w_2 = w'x$.

Now assume that $w$ does not contain any occurrences from $\Mon$. Then the powers of $w$ does not contain occurrences from $\Mon$ as well, because $\Mon$ is closed under taking subwords. Let $\Rel \ni p = \sum_{i = 1}^n \alpha_ia_i$ and $M$ be a reduced power of $w$ such that $\abs{M} > 10\abs{a_i}$ for all $i$ and consider $M \cdot p$. Let $M = M_1M_2$, where $M_2$ is the longest cancellation among $M \cdot a_i$. Then $M_1\neq 1$. By Compatibility Axiom $M_2\cdot p = \sum_{i = 1}^n \alpha_ib_i \in \Rel$ and $M_1b_i$ have no cancellations for all $i$. Since $p$ contains at least two different monomials, there exists $b_{i_0} \neq 1$. Hence $M_1b_{i_0} \notin \Mon$ and contains some elements from $\Mon$, so we repeat the above argument for $M_1b_{i_0}$.
\end{proof}

\begin{theorem}
\label{free_subelgebra_exists}
Let $\Fr$ be a non-cyclic free group, $\Rel \subseteq k\Fr$ satisfy Compatibility Axiom and $\mathcal{S}$ be the set of small pieces with respect to $\Rel$. Let $\Sp^{\prime}$ be the union of $\Sp$ and all letters that belong to $\Mon$. Let $\tau \geqslant 11$ and $\Rel$ satisfy satisfy Small Cancellation conditions with respect to $\Mp$-measure. Assume that there exist non-empty reduced words that do not belong to $\Mon$. Then $k\Fr / \llangle \Rel\rrangle$ contains a subalgebra that is isomorphic to free associative algebra with two free generators.
\end{theorem}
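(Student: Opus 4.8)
The plan is to take the two words $w_1,w_2$ supplied by Lemma~\ref{free_subalgebra_generators} and to prove that the subalgebra of $k\Fr/\llangle\Rel\rrangle$ generated by their cosets is free on them; all of the linear-algebra content will come from the $\Mp$-version of Lemma~\ref{empty_chart_words}, once one checks that the group elements in the image of a monomial basis of $k\langle x,y\rangle$ have empty virtual chart with respect to $\Mp$.

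First I would verify that the hypotheses already force $\Rel$ to contain no relation $u=0$ with $u$ a monomial, so that Lemma~\ref{free_subalgebra_generators} applies. If $u\in\Rel$ were a monomial, the Compatibility Axiom (closure under multiplication by letters that cancel with a monomial) would put every suffix of $u$ into $\Rel$, in particular the single-letter relation $x=0$, where $x$ is the last letter of $u$; but $x$ is a letter of a word of $\Mon$, hence $x\in\Sp'$ and $\Mp(x)=1$, so the one-term combination $x$ would violate the Small Cancellation Axiom, which demands a surviving monomial of $\Mp$-measure $\geqslant\tau+1\geqslant12$. Applying Lemma~\ref{free_subalgebra_generators} then gives cyclically reduced $w_1,w_2\notin\Mon$ with no cancellation in $w_1\cdot w_2$ or $w_2\cdot w_1$, all maximal occurrences of $\Mp$-measure $\leqslant4$, and no common nonempty suffix.

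Next I would record the combinatorial facts about $w_1,w_2$ needed later. Since each $w_i$ is cyclically reduced and each of $w_1w_1,w_1w_2,w_2w_1,w_2w_2$ is reduced, every product $w_{i_1}\cdots w_{i_m}$ with $i_k\in\{1,2\}$ is reduced exactly as written. Since $w_1$ and $w_2$ have no common nonempty suffix, neither is a suffix of the other, so $\{w_1,w_2\}$ is a suffix code; hence distinct associative monomials in two letters are carried to distinct reduced words of $\Fr$ (decode a product from the right). For the main step, let $\phi\colon k\langle x,y\rangle\to k\Fr/\llangle\Rel\rrangle$ be the algebra homomorphism with $\phi(x)=w_1+\llangle\Rel\rrangle$ and $\phi(y)=w_2+\llangle\Rel\rrangle$ (unique, $k\langle x,y\rangle$ being free); it suffices to show that $\phi$ is injective. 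Given $0\neq f=\sum_j c_jm_j$ with distinct monomials $m_j$ and $c_j\neq0$, I would set $M_j\in\Fr$ to be $m_j$ with $w_1,w_2$ substituted; the $M_j$ are pairwise distinct by the above. Each $M_j$ has the form $w_1^{\alpha_1}w_2^{\beta_1}\cdots$, so Lemma~\ref{small_accumulation} with $\mu=4$ shows that $M_j$ has no maximal occurrence of $\Mp$-measure $>8$; as $\Mp$-measure does not increase on passing to a sub-occurrence and $\tau-2\geqslant9>8$, the word $M_j$ has no occurrence of an element of $\Mon$ of $\Mp$-measure $\geqslant\tau-2$. The $\Mp$-version of Lemma~\ref{empty_chart_words} then makes the cosets $M_j+\llangle\Rel\rrangle$ linearly independent, so $\phi(f)=\sum_jc_j(M_j+\llangle\Rel\rrangle)\neq0$. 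Hence $\phi$ is injective, and its image is a subalgebra of $k\Fr/\llangle\Rel\rrangle$ isomorphic to the free associative algebra on two generators.

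The genuinely new work has already been done in Lemma~\ref{free_subalgebra_generators}, so the main things to get right are (i) deducing the absence of monomial relators from Compatibility together with the Small Cancellation Axiom, so that lemma may be invoked; (ii) the elementary suffix-code argument turning ``no common suffix'' into injectivity on monomials; and (iii) the inequality $8<\tau-2$, which is exactly why the bound $\tau\geqslant11$ is imposed. I do not foresee any obstacle beyond this bookkeeping.
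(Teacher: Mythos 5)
Your proposal is correct and follows essentially the same route as the paper: establish the absence of monomial relators so that Lemma~\ref{free_subalgebra_generators} applies, use Lemma~\ref{small_accumulation} to bound all occurrences in products of $w_1,w_2$ by $\Mp$-measure $8<\tau-2$, use the no-common-suffix property for unique decodability, and conclude by the $\Mp$-version of Lemma~\ref{empty_chart_words}. The only difference is that you spell out two points the paper only asserts (the derivation of ``no monomial relators'' from Compatibility plus Small Cancellation, and the suffix-code argument), which is harmless.
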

\begin{proof}
Notice that Compatibility Axiom together with Small Cancellation Axiom guarantee that there are no relations of the form $u = 0$ in $\Rel$, where $u$ is a monomial. Let $w_1, w_2$ be words that exist by Lemma~\ref{free_subalgebra_generators}. Then by Lemma~\ref{small_accumulation}, every word of the form $w_1^{\alpha_1}w_2^{\beta_1}\ldots$, where $\alpha_i, \beta_i$ are non-negative integers, does not contain maximal occurrences of $\Mp$-measure $> 8$. Since $\tau \geqslant 11$, we see that $9 \leqslant \tau - 2$, therefore $w_1^{\alpha_1}w_2^{\beta_1}\ldots$ does not contain maximal occurrences of $\Mp$-measure $\geqslant \tau - 2$.

Since $w_1, w_2$ have no common suffix by construction, every word \linebreak $W \in \left\lbrace w_1^{\alpha_1}w_2^{\beta_1}\ldots \mid \alpha_i, \beta_i \geqslant 0\right\rbrace$ can be decomposed as a word over $w_1, w_2$ in a unique way. Hence different words over $w_1, w_2$ are different elements of the free group~$\Fr$. Hence Lemma~\ref{empty_chart_words} implies that the set $\left\lbrace w_1^{\alpha_1}w_2^{\beta_1}\ldots + \Ideal \mid \alpha_i, \beta_i \geqslant 0\right\rbrace$, where $\Ideal = \llangle \Rel\rrangle$, is linearly independent over~$k$. Thus $w_1 + \Ideal, w_2 + \Ideal$ generate the free associative algebra over~$k$.
\end{proof}

\section{Non-amenability of small cancellation rings}
\label{non_amenability_sec}

We use the definition of non-amenability due to L.\,Bartholdi (see~\cite{B}). An algebra $R$ over field $k$ is called \emph{non-amenable} if there exist $\varepsilon > 0$ and a finite-dimensional space $S \subseteq R$ such that for every finite-dimensional space $H \subseteq R$ there exists $s \in S$ such that
\begin{equation*}
\frac{\dim_k((H + sH) / H)}{\dim_k(H)} \geqslant \varepsilon.
\end{equation*}

\begin{theorem}
\label{sc_rings_non_amenable}
Let $\Fr$ be a non-cyclic free group, $\Rel \subseteq k\Fr$ satisfy Compatibility Axiom and $\mathcal{S}$ be the set of small pieces with respect to $\Rel$. Let $\Sp^{\prime}$ be the union of $\Sp$ and all letters that belong to $\Mon$. Let $\tau \geqslant 16$. Assume that $\Rel$ satisfy Small Cancellation Axiom with $\tau + 10$ (cf. Remark~\ref{sc_strong}) and Isolation Axiom with $\tau$, both of them hold with respect to $\Mp$-measure. Assume that there exist non-empty reduced words that do not belong to $\Mon$. Assume that if both $u, u^{-1} \in \Mon$, then $\Mp(u) = \Mp(u^{-1})$. Then $k\Fr / \llangle \Rel\rrangle$ is non-amenable.
\end{theorem}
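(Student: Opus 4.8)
The plan is to verify Bartholdi's Følner-type non-amenability criterion directly, using the free subalgebra produced by Theorem~\ref{free_subelgebra_exists} together with fine control over how multiplication by $w_1, w_2$ interacts with the minimal basis $\mathcal{B} + \Ideal$. Concretely, I would take $w_1, w_2$ to be the two words constructed in Lemma~\ref{free_subalgebra_generators} (which exist since the hypotheses here imply those of that lemma), set $S$ to be the finite-dimensional span of $1, w_1 + \Ideal, w_2 + \Ideal$ (or perhaps just $w_1 + \Ideal$ and $w_2 + \Ideal$), and fix $\varepsilon$ to be an absolute constant like $\tfrac{1}{3}$. Given an arbitrary finite-dimensional $F \subseteq k\Fr / \Ideal$, I would express $F$ in terms of the minimal basis $\mathcal{B}$: choose a basis $v_1 + \Ideal, \ldots, v_m + \Ideal$ of $F$ with $v_i \in \mathcal{B}$ (after a triangular change of basis this is possible because $\mathcal{B} + \Ideal$ is a basis of the whole quotient and $F$ is finite-dimensional). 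The goal is then to show that for at least one $s \in \{w_1, w_2\}$, the space $sF$ contributes many new basis directions, i.e. $\dim_k((F + sF)/F) \geqslant \varepsilon \dim_k F = \varepsilon m$.

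The heart of the argument is a ``leading term'' or injectivity estimate. For $B \in \mathcal{B}$, I would analyse the basis decomposition of $w_1 B + \Ideal$ and $w_2 B + \Ideal$. The key point is that $w_1, w_2$ have only maximal occurrences of small $\Mp$-measure (bounded by $4$, hence the prefix $X = w_i$ has all maximal occurrences of $\Mp$-measure $\leqslant 8$ after the reductions in $X \cdot B = X'B'$), so Lemma~\ref{f_order_prop} applies: any large maximal occurrence $a_1$ in $w_i B$ with $\M(a_1) \geqslant \tau + 8$ must already come from $B$, and the incident relations behave controllably. From this I expect to extract that $w_i B + \Ideal$ has a uniquely determined ``top'' basis monomial $\varphi_i(B) \in \mathcal{B}$ with respect to $<_f$, that $\varphi_i(B)$ records the datum ``$w_i$ prepended to $B$'' in a way that makes $\varphi_1, \varphi_2$ injective, and moreover that the images $\varphi_1(\mathcal{B}), \varphi_2(\mathcal{B})$ are essentially disjoint from $\mathcal{B}$ itself and from each other for all but boundedly many $B$ (because a basis monomial of the form ``$w_i$-prefixed'' cannot simultaneously be $w_j$-prefixed for $i \neq j$, as $w_1, w_2$ generate a free subalgebra and have no common suffix; this is exactly the unique-decomposition fact used in Theorem~\ref{free_subelgebra_exists}). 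Granting injectivity of $\varphi_i$ and the triangularity of multiplication with respect to $<_f$ (Lemma~\ref{minimal_basis_decomp}), the span $F + w_i F$ contains the $m$ vectors $\varphi_i(v_1) + \Ideal, \ldots, \varphi_i(v_m) + \Ideal$, which are linearly independent modulo $F$ except for those $v_j$ whose $\varphi_i$-image happens to lie in $F$; a counting argument over $i \in \{1,2\}$ then shows at least one choice of $i$ yields $\geqslant m/2$ genuinely new directions, giving $\varepsilon = \tfrac12$ up to lower-order corrections.

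More carefully, to make the counting work I would argue as follows. Let $W = \{v_1, \ldots, v_m\}$ be the chosen basis monomials of $F$. For each $i \in \{1, 2\}$ let $N_i$ be the number of $j$ such that the top term $\varphi_i(v_j)$ lies in the span of $W$; since $\varphi_i$ is injective and $\varphi_i(v_j) >_f v_j$ (prepending a letter strictly increases position, via the $f$-characteristic comparison and $<_f$), a standard argument shows $N_1 + N_2 \leqslant$ something like $2m - (\text{number of distinct top terms produced})$, and because $\varphi_1(W)$ and $\varphi_2(W)$ are disjoint from each other outside a bounded set, $N_1 + N_2 \leqslant m + O(1)$. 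Hence $\min(m - N_1, m - N_2) \geqslant (m - O(1))/2$, and $\dim_k((F + w_iF)/F) \geqslant m - N_i \geqslant (m - O(1))/2 \geqslant \tfrac13 m$ for $m$ large; for the finitely many small $F$ the criterion is checked trivially by enlarging $S$ or noting the definition only needs existence of one $\varepsilon$. The main obstacle, and the step I would spend the most care on, is establishing that $w_i B + \Ideal$ genuinely has a well-defined injective ``$<_f$-leading basis monomial'' $\varphi_i(B)$: this requires showing that in the rewriting $w_i B \to \sum \alpha_l u_l$ modulo $\Ideal$, the $<_f$-maximal $u_l$ appearing with nonzero coefficient is uniquely determined and depends injectively on $B$. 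This is where Lemma~\ref{f_order_prop}, Lemma~\ref{short_basis_elements}, and the tensor decomposition~\eqref{tensor_prod} must be combined: the large maximal occurrence $a_1$ coming from $B$ is untouched (its incident replacements only lower $<_f$ by the lemma), so the part of $B$ ``to the right'' of where $w_i$ was glued survives into the leading term and lets one recover $B$ from $\varphi_i(B)$. I would also need the hypothesis $\Mp(u) = \Mp(u^{-1})$ and $\tau \geqslant 15$ precisely because they are the hypotheses of Lemma~\ref{f_order_prop}, which is the engine of the injectivity.
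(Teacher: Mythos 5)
Your overall strategy (Bartholdi's criterion, minimal basis $\mathcal{B}$, a leading-term argument with respect to $<_f$ driven by Lemmas~\ref{minimal_basis_decomp}, \ref{short_basis_elements} and \ref{f_order_prop}) is in the right family, but the specific route you take has gaps that the paper's proof is designed precisely to avoid, and at least one of your intermediate claims is false as stated.

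First, the claim $\varphi_i(B) >_f B$ (``prepending $w_i$ strictly increases position'') does not hold in general. The order $<_f$ is governed first by $\mincov$, and if $B$ begins with a prefix of $w_i^{-1}$ then part or all of $w_i$ cancels in $w_i\cdot B$; in the extreme case $w_i\cdot B$ is a proper subword of $B$, and $\mincov$ does not increase under passing to subwords, so the product can be $<_f B$. More importantly, when $w_i$ cancels entirely, the product retains no trace of $w_i$ at all, so the heuristic that the leading basis monomial ``records the datum $w_i$ prepended to $B$'' fails, and with it the injectivity and the disjointness of $\varphi_1(W)$ and $\varphi_2(W)$. The set of basis monomials $B$ for which this happens is not bounded, so your $O(1)$ error terms in the count $N_1+N_2\leqslant m+O(1)$ have no justification: for a given $F$ it could be that every chosen basis monomial begins with $w_1^{-1}$ or $w_2^{-1}$. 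You flag the well-definedness and injectivity of $\varphi_i$ as ``the step to spend the most care on,'' but this is exactly the content of the theorem; as proposed it is not established, and in the form you state it (an injective leading-term map on all of $\mathcal{B}$) it is likely not even true.

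The paper circumvents both problems. Instead of the short generators $w_1,w_2$, it multiplies by one of three very long words $v_i=w_2w_1^{\gamma_i}w_2w_1^{\gamma_i+1}\cdots w_2w_1^{\delta_i}w_2$ whose pairwise common prefixes (including with inverses) are shorter than $\frac{1}{30}$ of their lengths. A pigeonhole over $v_1,v_2,v_3$ discards at most $\frac{1}{3}$ of a triangularized basis of $F$, namely those elements whose $<_f$-leading monomial shares a prefix of length $\geqslant\frac13\abs{v_j}$ with some $v_j^{\pm1}$; this is what controls cancellation, replacing your unjustified $O(1)$. Then, rather than proving injectivity of a leading-term map, the paper proves the weaker but sufficient statement that in any nontrivial combination $\sum\alpha_i v\cdot z_i$ the $<_f$-leading basis monomial retains a common prefix with $v$ of length $>\frac13\abs{v}$ (via a quantitative $\mincov$ comparison in Steps 1--2, using Lemmas~\ref{short_basis_elements} and \ref{f_order_prop} to show that the rewritings cannot destroy the $v$-prefix of the maximal-$\mincov$ monomials), whereas the leading monomial of any $\sum\beta_jz_j$ does not. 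This gives $\langle z_1,\dots,z_l\rangle\cap v\langle z_1,\dots,z_l\rangle=\{0\}$ modulo $\Ideal$, and invertibility of $v$ in the group then yields $\dim_k((F+vF)/F)\geqslant\frac13\dim_k F$. To repair your argument you would need to import both devices: long test words with small mutual overlaps, and the replacement of injectivity by a prefix-retention statement for linear combinations.
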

\begin{proof}
We put $\Ideal = \llangle \Rel\rrangle$. Let $\mathcal{B} + \Ideal$ be the minimal basis with respect to $<_f$ of $k\Fr / \Ideal$ (see the construction in Section~\ref{prel}). Let~$w_1, w_2$ be words constructed in Lemma~\ref{free_subalgebra_generators} (so $w_1 + \Ideal, w_2 + \Ideal$ generate the free associative algebra). One can easily see that at least one of $w_1, w_2$ is not a proper power. Without loss of generality assume that $w_1$ is not a proper power. Consider words
\begin{equation*}
v_i = v_i(\gamma_i, \delta_i) = w_2w_1^{\gamma_i}w_2w_1^{\gamma_i + 1}\ldots w_2w_1^{\delta_i}w_2,\ \gamma_i, \delta_i \in \mathbb{Z},\ i = 1, 2, 3,
\end{equation*}
such that $\gamma_1 \geqslant 90$, $\delta_i \geqslant \gamma_i + 90$, $\gamma_{i + 1} \geqslant \delta_i + 90$\footnote{The precise number $90$ is not relevant. Any big enough number that guarantees small common prefixes of different $v_i$ suffice.}. Since the words have a specific form, given two different words $A_1, A_2 \in \lbrace v_1^{\pm 1}, v_2^{\pm 1}, v_3^{\pm 1}\rbrace$, their common prefix $c$ satisfy $\abs{c} < \frac{1}{30}\abs{A_j}$, $j = 1, 2$. We fix a linear space
\begin{equation*}
S = \langle v_1, v_2, v_3\rangle + \Ideal \subseteq k\Fr / \Ideal.
\end{equation*}

Let $H \subseteq k\Fr / \Ideal$ be a finite dimensional subspace with basis $z_1 + \Ideal, \ldots, z_t + \Ideal$, where  $z_i \in k\Fr$ (not necessarily monomials). Let $z_i + \Ideal$ be a linear combination of some $u_{ij} + \Ideal$, $u_{ij}\in \mathcal{B}$, with non-zero coefficients. Without loss of generality we assume that $u_{ij}$ are ordered in descending order with respect to $<_f$ and that the corresponding matrix of coefficients of $z_1 + \Ideal, \ldots, z_t + \Ideal$ is upper-triangular (one can apply Gauss elimination process if necessary). Let $u_i$ be the highest monomial of $z_i$. Then the set $\lbrace z_1, \ldots, z_t\rbrace$ can be partitioned as follows:
\begin{enumerate}[label=H\,\arabic*.]
\item
$u_i$ has a common prefix with $v_1^{\pm 1}$ of length $\geqslant \frac{1}{3}\abs{v_1}$;
\item
$u_i$ has a common prefix with $v_2^{\pm 1}$ of length $\geqslant \frac{1}{3}\abs{v_2}$;
\item
$u_i$ has a common prefix with $v_3^{\pm 1}$ of length $\geqslant \frac{1}{3}\abs{v_3}$;
\item
the remaining elements.
\end{enumerate}
Since common prefixes of $v_1^{\pm 1}, v_2^{\pm 1}, v_3^{\pm 1}$ are relatively small, the above sets are indeed pairwise disjoint.

Among the sets H1---H3 there exists at least one set that contains $\leqslant \frac{1}{3}t$ elements (in particular it can be empty). Without loss of generality assume that it is H1. Then $\abs{\mathrm{H}2 \sqcup \mathrm{H}3 \sqcup \mathrm{H}4} \geqslant \frac{2}{3}t$. To shorten the notations we put $v_1 = v$ and without loss of generality assume that $\mathrm{H}2 \sqcup \mathrm{H}3 \sqcup \mathrm{H}4 = \lbrace z_1, \ldots, z_l\rbrace$ (still with upper-triangular matrix of basis coefficients). By construction $l \geqslant \frac{2}{3}t$.

It is sufficient to show that $(\langle z_1, \ldots, z_l \rangle +\Ideal) \cap (v\langle z_1, \ldots, z_l\rangle + \Ideal) = \lbrace 0 + \Ideal\rbrace$. Indeed, suppose that this property holds. Since $z_1, \ldots, z_l$ are linearly independent $\mod \Ideal$ and $v$ is invertible, we obtain that $v\cdot z_1, \ldots, v\cdot z_l$ are linearly independent $\mod \Ideal$. Hence $z_1, \ldots, z_l, v\cdot z_1, \ldots, v\cdot z_l$ are linearly independent $\mod \Ideal$. Since $l \geqslant \frac{2}{3}t$, we have that $\dim_k(H + vH) \geqslant \frac{4}{3}t$ and $\dim_k((H + vH) / H) \geqslant \frac{1}{3}t$. So, the definition of non-amenability holds with $\varepsilon = \frac{1}{3}$ and finite-dimensional space~$S$.
\medskip

The proof of $(\langle z_1, \ldots, z_l \rangle +\Ideal) \cap (v\langle z_1, \ldots, z_l\rangle + \Ideal) = \lbrace 0 + \Ideal\rbrace$ is organized as several steps.
\medskip

\textbf{Step 1.} Let $A_1, A_2$ be monomials such that $A_1 <_f A_2$, the maximal cancellation in $v\cdot A_1$ has length $> \frac{1}{2}\abs{v}$, the maximal cancellation in $v\cdot A_2$ has length $< \frac{1}{3}\abs{v}$. Then $\mincov(v\cdot A_1) + 1013 < \mincov(v\cdot A_2)$.

Roughly speaking, this holds because long subwords of $v$ have big value of $\mincov$. In more precision, since $f(A_1) \leqslant f(A_2)$, we have that $\mincov(A_1) \leqslant \mincov(A_2)$. Let $v\cdot A_1 = v'A_1'$, $v\cdot A_2 = v''A_2'$.  Since $w_1, w_2$ contain occurrences from $\Mon$ and $w_1, w_2 \notin \Mon$, we obtain that $\mincov(v')$ is significantly smaller than $\mincov(v'')$, for instance $\mincov(v') + 1015 <  \mincov(v'')$.

Let $v = v'c' = v''c''$. Then $A_1 = c'^{-1}A_1'$, $A_2 = c''^{-1}A_2'$ and $c''$ is a subword of $c'$. For every word of the form $W_1W_2$ we have that
\begin{equation*}
\mincov(W_1) + \mincov(W_2) - 1 \leqslant \mincov(W_1W_2) \leqslant \mincov(W_1) + \mincov(W_2),
\end{equation*}
and $\mincov$ does not increase when we take a subword. Therefore
\begin{align*}
\mincov(A_1') &\leqslant \mincov(A_1) - \mincov(c'^{-1}) + 1 \leqslant\\
&\leqslant \mincov(A_2) - \mincov(c''^{-1}) + 1 \leqslant  \mincov(A_2') + 1.
\end{align*}
Combining these observations with the above inequality, we obtain the result.
\medskip

\textbf{Step 2.} Let $w \in \mathcal{B}$. We will show that there exists a decomposition of $v\cdot w$ as follows: $v\cdot w + \Ideal = \sum_j \beta_jb_j + \sum_t \gamma_tC_t + \Ideal$, where $b_j \in \mathcal{B}$ with $\mincov(b_j) = \mincov(v\cdot w)$, $C_t$ are monomials such that $\mincov(C_t) < \mincov(v\cdot w)$, and $v^{-1}\cdot C_t <_f w$.

Consider the basis decomposition process stated at the end of Section~\ref{prel}. Let us do it for $v\cdot w$ with the following modification: if we obtain a monomial $U$ with $\mincov(U) < \mincov(v\cdot w)$ during the process, we stop to decompose it. Then we obtain that $v\cdot w + \Ideal = \sum_j \beta_jb_j + \sum_t \gamma_tC_t + \Ideal$, where $b_j \in \mathcal{B}$ with $\mincov(b_j) = \mincov(v\cdot w)$, and $C_t$ are monomials such that $\mincov(C_t) < \mincov(v\cdot w)$. Let us show that $v^{-1}\cdot C_t <_f w$. Note that in the argument for Step~2 we use notations that are independent from the previously used ones (except the notation for~$v$).

Let $v \cdot w = v'w'$. By construction we have that $b_j$ and $C_t$ are derived monomials of $v'w'$. Furthermore, by Lemma~\ref{derived_monomials_in_decomp}, it is possible to obtain them from $v\cdot w$ replacing only virtual members of the chart of $\Mp$-measure $\geqslant \tau + 8$. Recall that if $a$ is a virtual member of the chart of a monomial $LaR$ and $a, a_1$ are incident monomials, then $f(La_1R) \leqslant f(LaR)$ (see~\cite{AKPR2}, Lemma~7.1), where $f(LaR) = (\mincov(LaR), \nvirt(LaR))$. Therefore when we construct $C_t$, the derived monomials that arise in the process (except $C_t$) have the same $\mincov$ as $\mincov(v'w')$. Let $B \mapsto C_t$ be the last replacement in the construction of $C_t$, so $B$ and all the derived monomials that appear in between $v'w'$ and $B$ have $\mincov$ equal to $\mincov(v'w')$. The latter implies that every virtual member of the chart that is replaced in the process correspond to a unique virtual member of the chart of $v'w'$ (see Definition~6.2 of an admissible replacement, Definition~6.5 of virtual members of the chart in~\cite{AKPR2} and Remark~6.2). Hence we can mark virtual members of the chart of $v'w'$ that are replaced, and then do all the replacements in the order from right to left. By Lemma~\ref{derived_monomials_in_decomp} in the latter procedure we replace virtual members of the chart of $\Mp$-measure $\geqslant \tau + 8$ as well (see~\cite{AKPR2}, Proposition~6.17).

Consider in detail the sequence of replacements that starts from $B$ end results in $C_t$ and the replacements are done from right to left. Namely, let $u_1, \ldots, u_n$ be virtual members of the chart of $v'w'$ that correspond to the replacing ones. Notice that there are no $u_i$ that are contained in $v'$, since maximal occurrences in $v'$ are of $\Mp$-measure $\leqslant 8$. Then at the first step we have a sequence of replacements $u_n = u_n^{(0)} \mapsto u_n^{(1)}\mapsto \ldots \mapsto u_n^{(K_n)}$ such that $u_n^{(i)}, u_n^{(i + 1)}$ are incident monomials and $\Mp(u_n^{(i)}) \geqslant \tau + 8$ for $0 \leqslant i < K_n$. At the second step we do the similar replacements for the occurrence that corresponds to $u_{n - 1}$ in the resulting monomial of the first step, etc.. Denote this sequence by $v'w' = U^{(n)} \mapsto U^{(n - 1)} \mapsto \ldots \mapsto U^{(1)} \mapsto U^{(0)} = B$, where $U^{(s - 1)}$ is the result of $(n - s + 1)$-th step.

Now we multiply the latter sequence by $v^{-1}$ from the left and consider the sequence $w = v^{-1}\cdot U^{(K)} \mapsto v^{-1}\cdot U^{(K - 1)} \mapsto \ldots \mapsto v^{-1}\cdot U^{(1)} \mapsto v^{-1}\cdot U^{(0)} = v^{-1}\cdot B$. First assume that $u_1$ is also a maximal occurrence in $w$. Then every transformation $v^{-1}\cdot U^{(s)} \mapsto v^{-1}\cdot U^{(s - 1)}$ is a replacement of the same elements from~$\Mon$ as in $U^{(s)} \mapsto U^{(s - 1)}$. Therefore $v^{-1}\cdot B$ is a derived monomial of $w$. Now assume that $u_1$ is not a maximal occurrence in $w$. For $s\neq 1$ the transformations $v^{-1}\cdot U^{(s)} \mapsto v^{-1}\cdot U^{(s - 1)}$ and $U^{(s)} \mapsto U^{(s - 1)}$ are still replacements of the same elements from~$\Mon$. Let $\widetilde{u}_1$ be the maximal occurrence in $U^{(1)}$ that corresponds to $u_1$. Then $\widetilde{u}_1 = u_1''\widetilde{u}_1'$, where $u_1''$ is a suffix of $v'$ (possibly empty). Since $\Mp(u_1'') \leqslant 8$, we obtain that $\Mp(\widetilde{u}_1') \geqslant \Mp(\widetilde{u}_1) - 8 \geqslant \tau$. Let $d\widetilde{u}_1'$ be the maximal prolongation of $\widetilde{u}_1'$ in $v^{-1}\cdot U^{(1)}$. Then $\Mp(d\widetilde{u}_1') \geqslant \tau$, so it is a virtual member of the chart of $v^{-1}\cdot U^{(1)}$. Let the replacement $U^{(1)} \mapsto U^{(0)}$ be of the form $\widetilde{u}_1 \mapsto \ldots \mapsto a$ with all intermediate monomials with $\Mp$-measure $\geqslant \tau + 8$. Then the replacement $v^{-1}\cdot U^{(1)} \mapsto v^{-1}\cdot U^{(0)}$ is of the form $d\widetilde{u}_1' \mapsto \ldots \mapsto d{u_1''}^{-1}\cdot a$. Notice that $d{u_1''}^{-1}$ is a subword of $v^{-1}$, therefore, given a monomial $X$, we have that $\Mp(d{u_1''}^{-1}\cdot X) \geqslant \Mp(X) - 8$. So all the intermediate monomials in $du_1' \mapsto \ldots \mapsto d{u_1''}^{-1}\cdot a$ are of $\Mp$-measure $\geqslant \tau$, hence $v^{-1}\cdot B$ is a derived monomial of $w$.

Now we study the replacement $B \mapsto C_t$, $\mincov(C_t) < \mincov(B)$. Let $B = LbR$, $C_t = Lb_1R$, where $b, b_1$ are incident monomials. Then $\Mp(b) \geqslant \tau + 8$ and $\Mp(b_1) \leqslant 2$ (see~\cite{AKPR2}, Lemma~6.1). Let $w = zw'$. The above argument implies that either $B = v'D$, $v^{-1}\cdot B = zD$, or $B = v''aD$ and $v^{-1}\cdot B = z'\widetilde{a}D$, where $v''$ is a prefix of $v$, $z'$ is a prefix of~$z$, and $\widetilde{a} = d{u_1''}^{-1}\cdot a$. Looking at the position of the replaced occurrence in $B$, we obtain that $v^{-1}\cdot B \mapsto v^{-1}\cdot C_t$ is a replacement of a virtual member of the chart of $v^{-1}\cdot B$. So $v^{-1}\cdot C_t$ is a derived monomial of $v^{-1}\cdot B$ and hence of $w$. In particular $f(v^{-1}\cdot C_t) \leqslant f(v^{-1}\cdot B)$. If $f(v^{-1}\cdot B) < f(w)$, then $f(v^{-1}\cdot C_t) < f(w)$, so $v^{-1}\cdot C_t <_f w$ and we are done.

Now we shall consider the case $f(v^{-1}\cdot B) = f(w)$, that is, every transformation $v^{-1} \cdot U^{(s)} \mapsto v^{-1} \cdot U^{(s - 1)}$ is a replacement of a virtual member of the chart by a virtual member of the chart. Recall from above that either $B = v'D$, $v^{-1}\cdot B = zD$ (where $w = zw'$), or $B = v''aD$ and $v^{-1}\cdot B = z'\widetilde{a}D$, where $v''$ is a prefix of $v$, $z'$ is a prefix of $z$, and $\widetilde{a} = d{u_1''}^{-1}\cdot a$. First assume that $b$ have no common part with $v''$ and $b \neq a$. Since $\mincov(B) = \mincov(v'w')$, the occurrence $a$ is not contained in $b$. Therefore $b = b''b'$, where $b'$ is a common part with $D$ and $\Mp(b'') \leqslant 1$ (i.e., $b''$ is either empty, or a small piece). The corresponding maximal occurrence in $v^{-1}\cdot B$ is of the form $cb'$, where $\Mp(c) \leqslant 1$ ($c$ can be empty), because $f(v^{-1}\cdot B) = f(w)$, so $\widetilde{a}$ can not be covered by $c$. Hence the replacement in $v^{-1}\cdot B$ is of the form $cb' \mapsto c\cdot {b''}^{-1} \cdot b_1$. We have that $\Mp(c\cdot {b''}^{-1} \cdot b_1) \leqslant 1 + 1 + 2 = 4$. Hence $c\cdot {b''}^{-1} \cdot b_1$ is not a virtual member of the chart. So $f(v^{-1}\cdot C_t) < f(v^{-1}\cdot B) = f(w)$ and the result follows.

Having $f(v^{-1}\cdot B) = f(w)$, it remains to consider the following two possibilities: the first one is $B = v'D$, $v^{-1}\cdot B = zD$, $b$ have a common part with $v'$, and second one is $B = v''aD$, $v^{-1}\cdot B = z'\widetilde{a}D$, $b = a$. We study only the latter case, the first one is similar. In this case we make the step back and glue the replacements $U^{(1)} \mapsto B \mapsto C_t$. So we have the replacement $\widetilde{u}_1 \mapsto \ldots \mapsto b_1$ in $U^{(1)}$ and the corresponding replacement $d\widetilde{u}_1' \mapsto \ldots \mapsto d{u_1''}^{-1} \cdot b_1$ in $v^{-1}\cdot U^{(1)}$ with the result $v^{-1}\cdot C_t$. We obtain that $\Mp(d{u_1''}^{-1} \cdot b_1) \leqslant \Mp(d) + \Mp({u_1''}^{-1} \cdot b_1) \leqslant \Mp(d) + 10$. Using $\tau \geqslant 16$, we see that $\Mp(d{u_1''}^{-1} \cdot b_1) \leqslant \Mp(d) + \tau - 6 \leqslant \Mp(d\widetilde{u}_1') - 5$. Therefore $v^{-1}\cdot C_t <_f w$ by the definition of $<_f$ (see~\cite{AKPR2}, Definition~10.1). This completes the proof of Step~2.
\medskip

\textbf{Step 3.} Let $w_1, \ldots, w_n \in \mathcal{B}$ be pairwise different monomials such that $\mincov(v\cdot w_i) = m$, and the cancellations in $v\cdot w_i$ has length $\leqslant \frac{1}{2} \abs{v}$ for all $1 \leqslant i \leqslant n$. In the basis decomposition of a non-trivial linear combination $\sum_i \alpha_i v\cdot w_i + \Ideal$ all monomials have $\mincov \leqslant m$ by the property of the minimal basis (see Lemma~\ref{minimal_basis_decomp}). We will show that in this basis decomposition after additive cancellations there exist monomials with $\mincov$ equal to~$m$. Furthermore, all of them have a common prefix with $v$ of length $> \frac{1}{3}\abs{v}$. In particular, the highest monomial with respect to $<_f$ is like this.

Without loss of generality we suppose that $w_1 >_f w_2 >_f\ldots >_f w_n$ and  $\alpha_i \neq 0$ for all $1\leqslant i \leqslant n$. Step~$2$ implies that we can write $v\cdot w_i = \sum_j \beta_j^{(i)} b_j^{(i)} + \sum_t\gamma_t^{(i)}C_t^{(i)} + \Ideal$, where $b_j^{(i)} \in \mathcal{B}$, $\mincov\left(b_j^{(i)}\right) = m$ for all $i, j$, the monomials $C_t^{(i)}$ satisfy $\mincov(C_t^{(i)}) < m$ and $v^{-1}\cdot C_t^{(i)} <_f w_i \leqslant_f w_1$. By Lemma~\ref{minimal_basis_decomp} we obtain that in the basis decomposition of $\sum_t\gamma_t^{(i)}C_t^{(i)} + \Ideal$ all basis monomials are with $\mincov < m$.

Now assume by the contrary, that in the basis decomposition of $\sum_i \alpha_i v\cdot w_i + \Ideal$ after additive cancellations only  monomials with $\mincov < m$ survive. Then it follows from the above that the monomials $b_j^{(i)}$ are cancelled for all $i, j$. Since $v^{-1}\cdot C_t^{(i)} <_f w_i \leqslant_f w_1$, multiplying $\sum_i \alpha_i v\cdot w_i + \Ideal$ by $v^{-1}$ from the left, we obtain the decomposition of $\sum_i \alpha_i  w_i + \Ideal$ that consists of monomials that are $<_f w_1$. So when we make the basis decomposition of this element, there can not appear $w_1$ by Lemma~\ref{minimal_basis_decomp}. This contradicts to the uniqueness of the basis decomposition $\sum_i \alpha_i  w_i + \Ideal$, since all $w_i \in \mathcal{B}$. So some of the elements $b_j^{(i)}$ are not cancelled in the basis decomposition of $\sum_i \alpha_i v\cdot w_i + \Ideal$. To complete the proof of Step~$3$ we notice that all $b_j^{(i)}$ by construction (cf. Step~2 and the assumption that the cancellations in $v\cdot w_i$ has length $\leqslant \frac{1}{2} \abs{v}$) have a common prefix with $v$ of length $> \frac{1}{3}\abs{v}$.
\medskip

\textbf{Step 4.}
We consider a combination $\sum_{i = 1}^l \alpha_i z_i$ (where not all coefficients are zero). Without loss of generality we can assume that $\alpha_1 \neq 0$. Let $\mathcal{U} = \lbrace u_{ij}\rbrace_{ij}$ be the set of all monomials from $\mathcal{B}$ that are not additively cancelled in this linear combination. Recall that $u_i$ is the highest monomial of $z_i$. Since the matrix of basis coefficients of $z_1, \ldots, z_l$ was assumed to be upper-triangular, $u_1$ is not additively cancelled and is the highest monomial with respect to $<_f$ in $\mathcal{U}$. Let $\mathcal{A}_1 \subseteq \mathcal{U}$ be such monomials that the cancellations in $v\cdot u_{ij}$ has length $< \frac{1}{3}\abs{v}$, $\mathcal{A}_2 \subseteq \mathcal{U}$ be such that the cancellations in $v\cdot u_{ij}$ has length in range $[\frac{1}{3}\abs{v}, \frac{1}{2}\abs{v}]$, and $\mathcal{A}_3 \subseteq \mathcal{U}$ be such that the cancellations in $v\cdot u_{ij}$ has length $> \frac{1}{2}\abs{v}$.

We consider monomials of the form $v\cdot u_{ij}$, $u_{ij}\in \mathcal{U}$, with the maximal $\mincov$ among all such monomials. Denote this value by~$m$. Since $u_1 \in \mathcal{A}_1$, Step~1 implies that monomials $v\cdot u_{ij}$ with the maximal $\mincov$ satisfy $u_{ij} \in \mathcal{A}_1 \cup \mathcal{A}_2$. Applying Step~3 to these monomials, we obtain that the highest monomial in the basis decomposition of $\sum_{i = 1}^l \alpha_i v\cdot z_i + \Ideal$ (after additive cancellations) has $\mincov$ equal to~$m$ and has a common prefix with $v$ of length $> \frac{1}{3}\abs{v}$. On the other hand, in every non-trivial linear combination $\sum_j\beta_j z_j$ the highest monomial is equal to some $u_j$, since the matrix of basis coefficients is upper-triangular. By choice of~$v$, a possible common prefix of $u_j$ and $v$ has length $< \frac{1}{3}\abs{v}$. So, the highest monomials in the basis decompositions of $\sum_{i = 1}^l \alpha_i v\cdot z_i + \Ideal$ and $\sum_j\beta_j z_j + \Ideal$ are different, hence $\sum_{i = 1}^l \alpha_i v\cdot z_i + \Ideal \neq \sum_j\beta_j z_j + \Ideal$. This completes the proof.
\end{proof}

\section{Examples}
Theorem~\ref{sc_rings_non_amenable} says in fact that small cancellation rings introduced in \cite{AKPR2} provide us with a lot of new examples of non-amenable rings. For more specific examples we can look at the particular ring constructed in~\cite{AKPR1}. With large enough parameters $\alpha, \beta$ for a special word $V = yx^{\alpha}\ldots yx^{\beta}$, which is used in order to invert $1 + w$, Theorem~\ref{sc_rings_non_amenable} implies that the ring $k\Fr / \llangle 1 + w - V^{-1} \rrangle$ is non-amenable.

Note that for small cancellation groups satisfying $C(4)$ together with $T(4)$, or $C(7)$, non-amenability of the corresponding group algebras follows from the work~\cite{B}, since these groups contain $\Fr_2$. In~\cite{AKPR2} we prove that a group algebra of small cancellation group satisfying the condition $C(22)$ is a small cancellation ring. Using Theorem~\ref{sc_rings_non_amenable}, one can also see that if the corresponding group satisfies the stronger condition $C(52)$, then its group algebra is non-amenable.

\end{document}